\newtheorem{lemma}{Lemma}[section]
\newtheorem{theorem}[lemma]{Theorem}
\theoremstyle{definition}
\theoremstyle{remark}
\newtheorem{remark}[lemma]{Remark}
\newcommand{\R}{\mathbb{R}}
\newcommand{\C}{\mathbb{C}}
\newcommand{\Z}{\mathbb{Z}}
\newcommand{\D}{\mathbb{D}}
\newcommand{\N}{\mathbb{N}}
\newcommand{\wind}{\operatorname{wind}}
\newcommand{\jtil}{\widetilde{J}}
\newcommand{\util}{\widetilde{u}}
\newcommand{\vtil}{\widetilde{v}}
\newcommand{\wtil}{\widetilde{w}}
\newcommand{\CZ}{{\rm CZ}}
\newcommand{\jbar}{\bar{J}}
\renewcommand{\P}{\mathcal{P}}
\newcommand{\Mfast}{\mathcal{M}^{\rm fast}}
\newcommand{\ev}{{\rm ev}}
\newcommand{\W}{\overline{W}}
\begin{document}

\title[On the knot types of periodic Reeb orbits]{On the knot types of periodic Reeb orbits of dynamically convex contact forms}
\author[]{Umberto L. Hryniewicz}
\author[]{Pedro A. S. Salom\~ao}
\author[]{Richard Siefring}

\address{%
Umberto L.\ Hryniewicz, Lehrstuhl f\"ur Geometrie und Analysis, RWTH Aachen University, Pontdriesch 10-12, D-52062 Aachen, Germany}
\email{hryniewicz@mathga.rwth-aachen.de}

\address{%
Richard Siefring}

\address{%
Pedro A. S. Salom\~ao, Shenzhen International Center for Mathematics - SUSTech, 1088 Xueyuan Avenue,  Shenzhen, China}
\email{psalomao@sustech.edu.cn}

\date{\today}

\begin{abstract}
We exhibit transverse knot types on the standard contact $3$-sphere that cannot be realized as periodic Reeb orbits of a dynamically convex contact form.
In particular, such transverse knot types do not arise as closed characteristics of strictly convex energy levels on a four dimensional symplectic vector space.
\end{abstract}

\maketitle

\section{Introduction and main results}

Knotted structures in the dynamics of flows in dimension three have been studied since the XIXth century with the works of Lord Kelvin and Helmholtz. The task of understanding knots of periodic orbits is a topic that was visited by several authors during the XXth century. For example, in a foundational paper~\cite{BiWill} Birman and Williams prove that all periodic orbits in the Lorentz attractor are fibered knots. More recently, it was proved in~\cite{AF} that the same is true for every periodic orbit of the geodesic flow of a Riemannian two-sphere with an explicit pinching condition on the curvatures. Here we consider Reeb flows on closed contact $3$-manifolds and study the transverse knot types realized by their closed orbits. 

As observed by Etnyre and Ghrist~\cite{EG}, one may take the complexity of the transverse knots realized by the closed orbits as a rough measure of the topological complexity of the flow. In the following discussion consider, for simplicity, the case of the standard contact $3$-sphere $(S^3,\xi_0)$, where $S^3=\{(z_0,z_1)\in\C^2:|z_0|^2+|z_1|^2=1\}$ and $\xi_0 = TS^3\cap iTS^3$. One may ask how complex a Reeb flow on $(S^3,\xi_0)$ can be from this viewpoint. The result from~\cite{EG} asserts that there exist real-analytic examples where all transverse knot types are simultaneously realized. The main goal of this paper is to study concrete geometric properties that prevent this phenomenon from happening, and force restrictions on the transverse knot types that can arise as closed orbits. Specifically, we look at the property of dynamical convexity, an important concept introduced by Hofer, Wysocki and Zehnder (HWZ) in~\cite{convex}. The precise definition will be recalled below. Two rich sources of examples are the strictly convex energy levels in a symplectic vector space, see Theorem~3.4 in~\cite{convex}, and the unit cotangent bundles of Finsler two-spheres with reversibility $r\geq1$ and flag curvatures pinched by strictly more than $(r/(r+1))^2$, see~\cites{HP,HS_Cambridge}. Additionally, dynamical convexity has been observed in numerous celestial mechanics models~\cites{dPKSS2024,JK,LS2025,HLOSY2025}.

The following terminology is useful: a transverse knot in a contact $3$-manifold will be called a \textit{Hopf fiber} if it is unknotted and has self-linking number $-1$ with respect to some embedded spanning disk. The choice of term is motivated by the fact that fibers of the standard Hopf fibration are transverse unknots with self-linking number $-1$ in $(S^3,\xi_0)$. 

The \textit{standard Liouville form} in $\C^2$ is $\alpha_0 = (-i/4) \ \Sigma_{j=0,1} \ \bar z_j dz_j - z_jd\bar z_j$, $(z_0,z_1)\in\C^2$, and the \textit{standard symplectic form} is $\omega_0 = d\alpha_0$. By a star-shaped domain $\Omega \subset \C^2$ we mean a compact connected domain with a smooth connected boundary such that: (1) $t\Omega\subset\Omega \ \forall t\in[0,1]$ and $0\not\in\partial\Omega$, (2) $\partial\Omega$ is transverse to rays issuing from the origin. Note that $\alpha_0$ induces a contact form on $\partial\Omega$. The corresponding contact structure on $\partial\Omega$ is denoted by $\xi_0 = \ker\alpha_0 \cap T\partial\Omega$. Dynamical convexity of $\Omega$ means, by definition, that the Conley-Zehnder index of every periodic Reeb orbit is at least $3$ when computed in a global $\omega_0$-symplectic trivialization of~$\xi_0$.

If $(V,\xi)$ is a contact manifold and $V$ is diffeomorphic to $S^3$, then we say that a knot $K \subset S^3$ admits a transverse representative in $(V,\xi)$ if there exists a diffeomorphism $\varphi:S^3\to V$ such that $\varphi(K)$ is transverse to $\xi$. Our main result reads as follows.

\begin{theorem}
\label{main1}
Let $\Omega$ be a dynamically convex star-shaped domain. A closed characteristic on $\partial\Omega$ is a Hopf fiber if, and only if, it is symplectically slice in $(\Omega,\omega_0)$, i.e. bounds an embedded symplectic disk in $(\Omega,\omega_0)$. In particular, the knots $8_{20}$, $9_{46}$, $10_{140}$ and $10_{155}$ in $S^3$ (Rolfsen's table~\cite{rolfsen}) admit transverse representatives in $(\partial \Omega,\xi_0)$ that cannot be realized as closed characteristics.
\end{theorem}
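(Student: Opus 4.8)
The plan has two stages: first establish the equivalence ``Hopf fiber $\Longleftrightarrow$ symplectically slice'' for closed characteristics on $\partial\Omega$ when $\Omega$ is dynamically convex, and then feed this equivalence into the theory of transverse knots bounding pieces of complex curves to obtain the statement about $8_{20},9_{46},10_{140},10_{155}$. For the forward implication let $\gamma$ be a closed characteristic that is a Hopf fiber, i.e.\ unknotted with self-linking number $-1$. Here I would use the fast-plane technology of Hofer--Wysocki--Zehnder and Hryniewicz--Salom\~ao: since $\Omega$ is dynamically convex and $\gamma$ is unknotted with ${\rm sl}(\gamma)=-1$, the orbit $\gamma$ is the asymptotic limit of a family of fast finite-energy planes in the symplectization of $\partial\Omega$, each of which projects to an embedded disk in $\partial\Omega$ spanning $\gamma$. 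Passing to the completion $\widehat{\Omega}=\Omega\cup\big([0,\infty)\times\partial\Omega\big)$ with an almost complex structure compatible with $\omega_0$ on $\Omega$ and cylindrical on the end, I would promote one such plane to a finite-energy holomorphic plane in $\widehat\Omega$ asymptotic to $\gamma$, truncate its cylindrical end, and flow it back along the Liouville vector field; this yields a disk in $\Omega$ bounded by $\gamma$ which is symplectic because it is holomorphic, and which is embedded by automatic transversality together with the index count.

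For the reverse implication, suppose $\gamma$ bounds an embedded symplectic disk $D\subset(\Omega,\omega_0)$. Perturb $D$ to be holomorphic for a compatible almost complex structure that is cylindrical near $\partial\Omega$; since $\gamma$ is transverse to $\xi_0$, the disk $D$ meets $\partial\Omega$ transversally along $\gamma$, so (after a standard modification near $\gamma$) grafting on the half-cylinder $[0,\infty)\times\gamma$ produces an embedded finite-energy holomorphic plane $\widetilde D$ in $\widehat\Omega$ asymptotic to $\gamma$. The heart of the matter is then Siefring's intersection theory for punctured holomorphic curves: feeding the embedded plane $\widetilde D$ into the adjunction formula pins down its asymptotic winding at $\gamma$, and, combined with dynamical convexity ($\CZ(\gamma)\ge 3$), this forces $\widetilde D$ to be a fast plane. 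A fast plane projects to an embedded disk in $\partial\Omega$, so $\gamma$ is unknotted, and the self-linking number computed from a fast plane equals $-1$; hence $\gamma$ is a Hopf fiber.

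Granting the equivalence, I turn to the four knots. Each of $8_{20},9_{46},10_{140},10_{155}$ is quasipositive and smoothly slice, so by Rudolph and Boileau--Orevkov it has a transverse representative $T$ in $(S^3,\xi_0)$, namely the closure of a suitable quasipositive braid, that bounds an embedded symplectic disk in the standard symplectic ball $(B^4,\omega_0)$; since the knot is slice this forces ${\rm sl}(T)=-1$, and $T$ is not a Hopf fiber because its knot type is nontrivial. (These knots do admit transverse representatives in $(\partial\Omega,\xi_0)$ because that contact manifold is the standard tight three-sphere, in which every knot type has transverse representatives.) Now suppose, for contradiction, that $T$ were realized as a closed characteristic $\gamma$ on $\partial\Omega$ for some dynamically convex $\Omega$. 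As $(\Omega,\omega_0)$ is a Liouville filling of $(\partial\Omega,\xi_0)$ with vanishing second homology, it is symplectomorphic to a round ball $(B^4(r),\omega_0)$ (Gromov, McDuff); the induced boundary identification is an orientation-preserving contactomorphism of $(S^3,\xi_0)$, hence contact isotopic to the identity, and so it carries $\gamma$ to a transverse knot that is transversely isotopic to $T$ and therefore --- like $T$ --- bounds an embedded symplectic disk in $(B^4(r),\omega_0)$. Transporting this disk back by the symplectomorphism exhibits $\gamma$ as symplectically slice in $(\Omega,\omega_0)$; by the equivalence already proved, $\gamma$ is then a Hopf fiber, hence unknotted, contradicting that its knot type is $8_{20}$, $9_{46}$, $10_{140}$ or $10_{155}$. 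Therefore these transverse knot types cannot be realized as closed characteristics.

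The step I expect to be the main obstacle is the reverse implication of the equivalence: producing a holomorphic plane from an \emph{arbitrary} embedded symplectic disk and then squeezing out of Siefring's adjunction formula, in combination with dynamical convexity, that this plane is fast --- this is where the hypothesis is genuinely used and where the intersection-theoretic bookkeeping is delicate. Secondary points requiring care are the construction of the symplectic disk in the forward direction (capping off a fast plane inside the filling with the correct boundary), and, in the application, the invariance of ``bounds an embedded symplectic disk'' under transverse isotopy together with its transfer across the symplectomorphism to the round ball, which rests on the connectedness of the contactomorphism group of the standard contact three-sphere.
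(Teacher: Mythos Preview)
Your reverse implication has a genuine gap. You build a single finite-energy plane $\widetilde D$ in the completion $\widehat\Omega$ from the symplectic slicing disk and then assert that ``a fast plane projects to an embedded disk in $\partial\Omega$''. This is false for planes in the completion: $\widetilde D$ lives mostly inside $\Omega\subset\widehat\Omega$, and there is no projection $\widehat\Omega\to\partial\Omega$; only a plane lying entirely in the cylindrical piece $[0,\infty)\times\partial\Omega$ has a well-defined $\partial\Omega$-component, and nothing in your argument produces such a plane. Relatedly, the claim that Siefring's adjunction formula combined with $\CZ(\gamma)\ge 3$ forces the plane to be fast is not substantiated; in the paper the initial plane is arranged to have \emph{trivial} asymptotic behaviour at $\infty$ (hence $\wind_\infty=-\infty\le 1$) by an explicit grafting construction, not via adjunction.

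What is actually needed is a moduli-space argument. Starting from the single fast plane built out of the slicing disk, one studies the two-dimensional moduli space $\Mfast(P,\jbar,e)$ of embedded fast planes in $\widehat\Omega$ in its relative homotopy class, together with the evaluation map $\ev:\Mfast_1(P,\jbar,e)\to\widehat\Omega$. Automatic transversality makes $\ev$ a local diffeomorphism and positivity of intersections makes it injective; the crux is surjectivity, and this is where dynamical convexity is genuinely used. As one pushes the marked point toward $\{+\infty\}\times\partial\Omega$, SFT compactness could produce a building with nontrivial upper levels, but a $\wind_\pi$ estimate shows that any such breaking forces a periodic orbit $P'$ (with a capping disk short and unlinked relative to $(P,D)$) of Conley--Zehnder index at most $2$, contradicting dynamical convexity. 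With breaking ruled out one obtains fast planes contained entirely in $[0,\infty)\times\partial\Omega$; it is \emph{these} planes whose $\partial\Omega$-component is an embedded disk spanning $\gamma$, yielding unknottedness and self-linking $-1$. Your single-plane adjunction idea misses this deformation step and misidentifies where the hypothesis enters. (For the forward implication there is a shortcut: a fast plane in the symplectization already projects to a disk in $\partial\Omega$ transverse to the Reeb flow, and such a disk is symplectic in $(\Omega,\omega_0)$ since $\omega_0|_{T\partial\Omega}=d\lambda$; no truncate-and-flow is needed. Your treatment of the four knots via Rudolph and Boileau--Orevkov is essentially the paper's.)
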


\begin{remark}
The Hopf flow is ``simplest'' from the viewpoint of knot complexity: only Hopf fibers appear. It is, of course, far from true that a Reeb flow on $(S^3,\xi_0)$ all of whose closed orbits are Hopf fibers is as simple as the Hopf flow: work of Anosov and Katok~\cite{AnosovKatok} provides transitive examples with exactly two closed orbits in such a way that the union of the closed orbits is transversely isotopic to a pair of Hopf fibers as seen in the Hopf flow; we call these \textit{Hopf links}. Work of Albers, Geiges and Zehmisch~\cite{AGZ_Katok} explores the Anosov-Katok method in the context of Reeb flows in three dimensions. See also the appendix of the paper~\cite{ABE} by Abbondandolo, Benedetti and Edtmair where this construction in the transitive case is reformulated in modern symplectic language. Moreover, a special case of the result in~\cite{CGHHL1} implies that the union of the closed orbits of a Reeb flow on $(S^3,\xi_0)$ with exactly two closed orbits is transversely isotopic to a Hopf link. 
\end{remark}

\begin{remark}
One may ask, as in~\cite{EG}, what transverse knots/links must be realized as periodic orbits for all Reeb flows on $(S^3,\xi_0)$. Work of HWZ~\cite{HWZ_unknotted} implies that the Hopf fiber is always realized. The lack of counter-examples leads us to conjecture that \textit{every Reeb flow on $(S^3,\xi_0)$ possesses a pair of closed orbits transversely isotopic to a Hopf link.} In other words, a Hopf link should be the minimal basic structure that is always present. Note that the lift of the geodesic flow of a reversible Finsler two-sphere, from the unit sphere bundle to $(S^3,\xi_0)$, is a Reeb flow that realizes a Hopf link: simply lift the unit vectors tangent to an embedded closed geodesic. Hopf links follow hand-in-hand the historical development of Symplectic Dynamics since Poincar\'e's studies of the planar circular restricted $3$-body problem: the retrograde and direct orbits lift to a Hopf link after Levi-Civitta regularization, both in a low energy regime or in a subcritical low/high mass-ratio regime. These (lifted) orbits also span the annulus-like global surface of section that led Poincar\'e to state his celebrated \textit{last geometric theorem}, later known as the Poincar\'e-Birkhoff theorem, which in turn led Arnold to state his conjecture on the minimal number of fixed points of Hamiltonian diffeomorphisms. Arnold's conjecture paved the way to Floer theory, we refer to the book by Hofer and Roberts~\cite{Helmut_book} for a detailed historical account. Strong dynamical consequences of the existence of a Hopf link of periodic orbits were obtained in~\cite{HMS}, for instance, under a non-resonance assumption, the growth of closed orbits with respect to period is at least quadratic.
\end{remark}

There are results available in the literature about transverse knot types for dynamically convex Reeb flows on $(S^3,\xi_0)$. It follows from results in~\cite{convex} that every such flow has a Hopf link made of periodic orbits, and the results in~\cite{hryn_jsg},~\cite{HSW} imply that each component spans a disk-like global surface of section (GSS), and the link spans an annulus-like GSS. In~\cite{HSS}, it is shown that if $L$ is a link made of periodic orbits of a dynamically convex Reeb flow on $(S^3,\xi_0)$, such that every component of $L$ is a Hopf fiber, then $L$ is a fibered link and spans a GSS. It also follows from the results in~\cite{HSS} combined with results by Abbondandolo, Edtmair, and Kang in~\cite{AEK} that, in the nondegenerate case, the periodic Reeb orbit of minimal action spans a GSS, although it is not known if this GSS is a disk. This latter statement is also valid without the nondegeneracy assumption.


Theorem~\ref{main1} will be proved as a consequence of Theorem~\ref{main2} which is a dynamical characterization of smooth compact star-shaped domains in $(\C^2,\omega_0)$.

Consider a compact symplectic manifold $(W,\omega)$ with contact-type boundary $(V,\lambda)$. This means that there exists a Liouville vector field $Y$ defined on a neighborhood of $V=\partial W$ in $W$ that is transverse to $V$, and $\lambda$ is the pull-back of $i_Y\omega$ by the inclusion map $V\hookrightarrow W$. It follows that $\lambda$ is a contact form on $V$. The boundary gets decomposed as $V=V^+\cup V^-$, where $Y$ points outwards along $V^+$ and inwards along $V^-$. One calls $V^+$ the \textit{convex} part of the boundary and $V^-$ the \textit{concave} part of the boundary. The corresponding contact structure on $V$ is denoted $\xi=\ker\lambda$. 

A transverse link in $(V^+,\xi)$ is called \textit{symplectically null-homologous} if it bounds an embedded symplectic surface in $(W,\omega)$. By a symplectic surface we mean one where~$\omega$ restricts to an area form. If the surface is a disk then its boundary is called a \textit{symplectically slice} knot. 

Let $P$ be a periodic Reeb orbit on $V^+$ with primitive period $T>0$, and $D \subset W$ be a symplectic slicing disk for $P$. Let $V_0^+ \subset V^+$ be the connected component that contains~$P$. A pair $(P',D')$, consisting of a periodic Reeb orbit $P' \subset V_0^+$ of (not necessarily primitive) period $T'>0$ and a capping disk $D'$ for $P'$ in $W$, is said to be \textit{short and unlinked relative to $(P,D)$} if $T'<T$, and if $D$ and $D'$ are homotopic through capping disks to a pair of disjoint capping disks. We say that $(V^+_0,\lambda)$ is \textit{dynamically convex in $W$ relative to $(P,D)$} if:
\begin{itemize}
\item[(i)] The Conley-Zehnder index of $P$ relative to $D$ is at least $3$.
\item[(ii)] The Conley-Zehnder index of $P'$ relative to $D'$ is at least $3$ for every $(P',D')$ that is short and unlinked relative to $(P,D)$.
\end{itemize}
See \S~\ref{ssec_CZ} for a precise definition  of the Conley-Zehnder index relative to a capping disk. Motivated by results of Geiges and Zehmisch~\cite{GZ}, we encode our analysis in the form of a symplectic characterization result for star-shaped domains in $(\C^2,\omega_0)$.

\begin{theorem}
\label{main2}
Let $(W,\omega)$ be a connected compact symplectic manifold with contact-type boundary $(V,\lambda)$. Suppose that $(W,\omega)$ is symplectically aspherical, i.e. $\int_{S^2}f^*\omega=0$ for every smooth map $f:S^2\to W$. Assume that some connected component $V_0^+ \subset V^+$ has a simple periodic orbit $P$ with a symplectic slicing disk $D$ such that $(V_0^+,\lambda)$ is dynamically convex in $W$ relative to $(P,D)$. Then either there is a periodic Reeb orbit on $V^-$ with period less than $\int_D\omega$, or all of the following hold:
\begin{itemize}
\item[(a)] $(V^+,\ker\lambda)$ is contactomorphic to $(S^3,\xi_0)$ and $V^- = \emptyset$.
\item[(b)] The orbit $P$ is unknotted and has self-linking number $-1$, i.e. it is a Hopf fiber.
\item[(c)] $(W,\omega)$ is symplectomorphic to a star-shaped region in $(\C^2,\omega_0)$.
\end{itemize}
\end{theorem}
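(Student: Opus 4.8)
The plan is to run a pseudoholomorphic curve argument in the spirit of Hofer--Wysocki--Zehnder and Geiges--Zehmisch, using the symplectic slicing disk $D$ to seed a finite-energy foliation of the symplectic completion of $(W,\omega)$. First I would attach cylindrical ends $[0,\infty)\times V^+$ and $(-\infty,0]\times V^-$ to obtain the completion $(\widehat W,\widehat\omega)$ and fix a compatible almost complex structure $\jtil$ that is cylindrical on the ends, adapted to the Reeb vector fields of $\lambda$ on $V^\pm$, and generic away from a neighbourhood of $P$ (perturbing $\lambda$ and passing to limits to accommodate degenerate orbits if needed). Since $P$ is a simple transverse Reeb orbit and $D$ meets $V^+$ transversally along $P=\partial D$, one can deform $D$ near its boundary to be $\jtil$-holomorphic there and run a Bishop-type filling argument using $D$ as initial data, together with Gromov compactness in $\widehat W$ (as in the existence results of~\cite{HWZ_unknotted} and~\cite{GZ}), to produce a $\jtil$-holomorphic finite-energy plane $\util_0\colon\C\to\widehat W$ asymptotic to $P$ and lying in the relative homology class of $D$. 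Symplectic asphericity is used here to make the relative class and the energy $\int\util_0^*\widehat\omega=\int_D\omega$ well defined and to rule out sphere bubbling. Condition (i), $\CZ(P,D)\ge 3$, forces the Fredholm index of $\util_0$ to be at least $2$, so automatic transversality applies and the component $\M$ of the moduli space of finite-energy planes asymptotic to $P$ in the class of $D$ through $\util_0$ is a smooth manifold; intersection-theoretic bounds (Siefring's theory) show that the curves in $\M$ are embedded and pairwise disjoint, so $\M$ is locally a foliation. (Already at this stage, embeddedness of $\util_0$ together with~\cite{HWZ_unknotted} forces $P$ to be unknotted with self-linking $-1$, which is part of (b).)

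The heart of the proof is the compactness analysis of $\M$, and I expect it to be the main obstacle. A sequence of planes in $\M$ can degenerate only as follows. Sphere bubbling is excluded by asphericity. A holomorphic building with a level in the negative end $\R\times V^-$ produces, by SFT compactness, a periodic Reeb orbit on $V^-$, and tracking actions along the building (with total energy $\int_D\omega$) bounds its period by $\int_D\omega$ --- this is the first alternative of the statement. In the remaining case the building has nontrivial levels in the positive symplectization $\R\times V^+$ asymptotic to other periodic orbits $P'\subset V_0^+$; a linking and writhe computation relative to $D$ (using that the curves in the building lie close to leaves of $\M$, hence to copies of $D$) produces for each such $P'$ a capping disk $D'$ making $(P',D')$ short and unlinked relative to $(P,D)$ in the precise sense of the statement, so condition (ii) gives $\CZ(P',D')\ge 3$. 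Feeding this back into the index inequality for the components of the building (genus zero, controlled punctures, asphericity) rules the building out, so the limit is again an embedded plane in $\M$. Hence, away from the first alternative, $\M$ is compact; being also locally a foliation of the connected manifold $\widehat W$, its closure together with the orbit cylinder over $P$ is a finite-energy foliation of $\widehat W$ whose only binding is $\R\times P$. Matching the breaking configurations of the disk family exactly to the ``short and unlinked'' hypothesis is the delicate point.

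Assume from now on that $V^-$ carries no periodic Reeb orbit of period less than $\int_D\omega$; then by the previous step $\M$ is compact and, together with $\R\times P$, foliates $\widehat W$. If $V^-$ were nonempty, some leaves would have to reach points with arbitrarily negative coordinate on the negative end, and SFT compactness applied to the leaves meeting $\{-R\}\times V^-$ as $R\to\infty$ would again produce a Reeb orbit on $V^-$ of period at most $\int_D\omega$, contradicting our assumption; hence $V^-=\emptyset$ and $\widehat W=W\cup([0,\infty)\times V^+)$. Projecting the leaves to $V^+$ and using their asymptotics, the foliation induces an open book decomposition of $V^+$ with binding the simple orbit $P$ and disk-like pages transverse to the Reeb flow, i.e.\ disk-like global surfaces of section. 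A closed $3$-manifold admitting an open book with disk pages is $S^3$, and the contact structure supported by such an open book is the unique tight one, so $(V^+,\ker\lambda)$ is contactomorphic to $(S^3,\xi_0)$; this proves (a). Computing the self-linking number of $P$ from a page against the contact framing gives $\mathrm{sl}(P)=-1$, so $P$ is an unknotted transverse knot with self-linking $-1$, i.e.\ a Hopf fiber, which is (b) (alternatively one invokes~\cite{HWZ_unknotted} and~\cite{hryn_jsg} directly, as noted above).

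Finally, for (c): the foliation exhibits $W$ as the total space of a trivial $2$-disk bundle over the $2$-disk, so $W$ is diffeomorphic to $B^4$ and in particular $H^2_{\mathrm{dR}}(W)=0$. The first-return map on the interior of a page is a diffeomorphism of the open disk preserving the area form induced by $\omega$, and the return-time function is smooth; from this data one reconstructs $\widehat W$, and hence $W$, as the region below a star-shaped hypersurface in $(\C^2,\omega_0)$. Concretely, the open book yields coordinates on $W$ in which $\omega$ is cohomologous to the restriction of $\omega_0$ to a star-shaped domain, and a Moser interpolation (possible since $H^2_{\mathrm{dR}}(W)=0$) upgrades the diffeomorphism to a symplectomorphism onto such a domain. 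Equivalently, the finite-energy foliation produced above is exactly the input of the recognition theorem of Geiges--Zehmisch~\cite{GZ}, whose conclusion is (c). Assembling these steps yields the stated dichotomy.
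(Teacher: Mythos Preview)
Your overall architecture matches the paper's: seed a moduli space of finite-energy planes from the symplectic slicing disk, analyze SFT-limits to get either a short orbit on $V^-$ or compactness, and then read off an open book on $V^+$ and a symplectomorphism to a star-shaped domain. However, there is one genuine technical gap and two inaccuracies worth flagging.

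\textbf{The missing ingredient: fast planes and weighted Fredholm theory.} You write that $\CZ(P,D)\geq 3$ ``forces the Fredholm index of $\util_0$ to be at least $2$, so automatic transversality applies.'' This is where the argument as written does not go through. The unconstrained index of a plane asymptotic to $P$ is $\CZ(P,D)-1$, which may well exceed $2$; in that case automatic transversality in the sense of Hofer--Lizan--Sikorav gives a moduli space of dimension $>2$, and neither the ``locally a foliation'' claim nor the self-intersection computation survives. The paper's fix is to restrict to \emph{fast} planes, i.e.\ those with $\wind_\infty\leq 1$, and to set up the Fredholm theory with an exponential weight $\delta<0$ placed in the spectral gap of the asymptotic operator between eigenvalues of winding $1$ and $2$. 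This forces $\CZ^\delta_\sigma(P)=3$ regardless of the actual $\CZ$, hence index exactly $2$; the kernel sections are then nowhere vanishing, giving both automatic transversality and the local-diffeomorphism property of the evaluation map. The weighted intersection number $\util*_\delta\vtil$ (with $\util*_\delta\util=0$) is what makes the pairwise-disjointness argument work. You also need to check that the fast condition is closed under $C^\infty_{\rm loc}$-limits, which the paper does via a $\wind_\pi$ argument. None of this is visible in your sketch, and without it the moduli space $\M$ is not what you need it to be.

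\textbf{Deducing (b).} You assert that embeddedness of $\util_0$ together with~\cite{HWZ_unknotted} already gives that $P$ is a Hopf fiber. But $\util_0$ is a plane in the completion $\widehat W$, not a spanning disk in $V^+$, so this does not follow. The paper instead tracks planes whose images escape to $+\infty$ in the positive end, shows (again via $\wind_\pi=0$) that for large $n$ such a plane lies entirely in $[0,+\infty)\times V^+$ and its $V^+$-projection is an embedded disk transverse to the Reeb flow; the self-linking computation is then done from that disk.

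\textbf{Strictness of the period bound.} The theorem claims an orbit on $V^-$ with period \emph{strictly less} than $\int_D\omega$. Your sketch produces $\leq$. The paper squeezes strict inequality out of the monotonicity lemma applied to the level-zero piece of the limiting building (which has positive $\omega$-area), then lets the auxiliary parameter $b\to 0^+$ in the family of symplectic forms $\overline{\omega}_b$ on the completion.

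Finally, for (c) the paper does not use a Moser interpolation or the first-return data as you suggest; it invokes Theorem~9.4.2 of McDuff--Salamon (a symplectomorphism-at-infinity recognition result) after identifying the positive end with the complement of a compact set in $\C^2$. Your route via~\cite{GZ} would also work and is in the same spirit.
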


\begin{remark}
The above statement fully recovers Theorem~1.2 from~\cite{McDuff_boundaries}.
\end{remark}

\begin{remark}
If $(W,\omega)$ is an exact symplectic cobordism, then the first alternative in the conclusions of Theorem~\ref{main2} can be improved to provide a contractible periodic Reeb orbit in~$V_-$.
\end{remark}

\begin{remark}
If the symplectic asphericity assumption on $(W,\omega)$ is dropped, then one can still conclude that $(W,\omega)$ is symplectomorphic to a star-shaped region in $(\C^2,\omega_0)$ up to symplectic blow up at finitely many points. We do not implement this here since the main point of this work is to study knot types of periodic Reeb orbits, and not to prove yet another dynamical characterization of symplectic $4$-manifolds.
\end{remark}

\begin{proof}
[Proof of Theorem~\ref{main1} using Theorem~\ref{main2}]
The statement that a closed characteristic in $\partial\Omega$ is symplectically slice in $(\Omega,\omega_0)$ if, and only if, it is a Hopf fiber, follows directly from Theorem~\ref{main2} applied to $(W,\omega)=(\Omega,\omega_0)$.

A source of symplectically null-homologous transverse links are the transverse intersections of a complex curve in $\C^2$ with the round $3$-sphere. Using results of Micallef and White~\cite{MW} we can perturb the complex structure to an almost complex structure, and the curve to a pseudo-holomorphic curve with only transverse double points. These can be swapped for genus using a standard trick, and we thus get an embedded symplectic surface spanning the link. Such links are examples of what are known as \textit{transverse $\C$-links}. The general definition allows for transverse intersections of a complex curve with a strictly pseudo-convex $3$-sphere bounding a Stein $4$-ball in $\C^2$, see Definition~4.74 in~\cite{R2}. But according to Proposition~4.75 in~\cite{R2}, it was observed by Boileau and Orevkov in~\cite{BO} that results of Eliashberg~\cite{E} imply that all transverse $\C$-links can be realized as intersections of complex curves with the round $3$-sphere. Moreover, Rudolph proved in~\cite{R1} that every so-called quasipositive link is smoothly isotopic to a transverse $\C$-link, and therefore smoothly isotopic to a symplectically null-homologous link. The converse is proved in~\cite{BO}, namely, every transverse $\C$-link is quasipositive or, more generally, every symplectically null-homologous link on the boundary of a symplectic $4$-ball is quasipositive.

The upshot is that a smooth link in $S^3$ is smoothly isotopic to a symplectically null-homologous transverse link in $(S^3,\xi_0)$ if, and only if, the link is quasipositive. One is then led to find symplectically slice knots by finding quasipositive knots with slice genus $0$. Since it is conceivable that a knot could be spanned in the $4$-ball by a symplectic surface with genus bigger than the slice genus, we need one more ingredient: a result by Gadgil and Kulkarni~\cite{GK} asserting that the symplectic surface minimizes genus on its relative homology class. Examples of quasipositive slice knots can be spotted in Rolfsen's knot table~\cite{rolfsen}: $8_{20}$, $9_{46}$, $10_{140}$ and $10_{155}$.
\end{proof}

\medskip

\noindent \textbf{Acknowledgments.} We would like to warmly thank Shah Faisal for spotting a mistake in the statement Theorem~\ref{main2} of a previous version of this paper: the periodic orbit claimed to exist in $V_-$ is not necessarily contractible. U.H. acknowledges the support from the RWTH Aachen, the SFB/TRR 191 ‘Symplectic Structures in Geometry, Algebra and Dynamics’, funded by the DFG (Projektnummer 281071066--TRR 191), and the FAPERJ Grant `Pesquisador Visitante' 203.664/2025. PS was partially supported by the National Natural Science Foundation of China (grant number W2431007). PS thanks the support of the Shenzhen International Center for Mathematics - SUSTech. RS gratefully acknowledges financial support from the DFG grant BR 5251/1-1, and from the SFB/TRR 191 “Symplectic Structures in Geometry, Algebra and Dynamics” funded by the DFG (Projektnummer 281071066--TRR 191) during the completion of this project.

\section{Proof of Theorem~\ref{main2}}\label{sec_thm_main1}

\subsection{Symplectic manifolds with contact-type boundary}
\label{ssec_symp_mfds}

Let $(W,\omega)$ be a compact symplectic $4$-manifold. Assume that there exists a Liouville vector field $Y$ defined on a neighborhood of $V = \partial W$ transverse to $V$. The primitive $\alpha = i_Y\omega$ of $\omega$ near $V$ pulls back to a contact form $\lambda$ on $V$ under the inclusion map $V\hookrightarrow W$. Then $V$ splits into $V = V^+ \sqcup V^-$, where $Y$ points outward along $V^+$ and points inward along $V^-$. The associated contact structure is denoted by $\xi = \ker \lambda \subset TV$. We call $(W,\omega)$ a \textit{symplectic cobordism} that is \textit{convex} at $(V^+,\lambda)$ and \textit{concave} at $(V^-,\lambda)$.

Let $\varphi^t_Y$ denote the (local) flow of $Y$, and with $\varepsilon>0$ small enough consider diffeomorphisms
\begin{equation}
\Phi^+ : (-\varepsilon,0] \times V^+ \to \mathcal{U}^+ \qquad \Phi^- : [0,\varepsilon) \times V^- \to \mathcal{U}^- \qquad \Phi^\pm(a,p) = \varphi^a_Y(p)
\end{equation}
onto neighborhoods $\mathcal{U}^\pm$ of $V^\pm$ in $W$. One can show that $(\Phi^\pm)^*\alpha = e^a\lambda$, where here we abuse notation and write $\lambda$ for the pull-back of $\lambda$ under the projection $\R \times V \to V$. The symplectic form gets represented by $$ \Phi^*\omega = d(e^a\lambda) = e^a ( da \wedge \lambda + d\lambda ) \, . $$ The \textit{symplectic completion} of $(W,\omega)$ is the symplectic manifold $(\overline{W},\overline{\omega})$ where $\overline{W}$ is defined as 
\begin{equation}
\label{completion}
\overline{W} = \left( W \sqcup (-\varepsilon,+\infty) \times V^+ \sqcup (-\infty,\varepsilon) \times V^- \right) / \sim
\end{equation}
where points are identified according to
\begin{equation*}
\Phi^+(a,p) \in \mathcal{U}^+ \sim (a,p) \in (-\varepsilon,0] \times V^+ \qquad \Phi^-(a,p) \in \mathcal{U}^- \sim (a,p) \in [0,\varepsilon) \times V^-
\end{equation*}
and the symplectic form $\overline{\omega}$ is defined as $$ \overline{\omega} = \omega  \ \text{on} \ W, \qquad\qquad \overline{\omega} = d(e^a\lambda) \ \text{on} \ (-\varepsilon,+\infty) \times V^+ \ \text{and on} \ (-\infty,\varepsilon) \times V^- \, . $$ Later it will be needed to consider the compactification
\begin{equation}
\overline{W}_\infty \ = \ \overline{W} \ \sqcup \ \{+\infty\} \times V^+ \ \sqcup \ \{-\infty\} \times V^-
\end{equation}
where the end $[0,+\infty)\times V^+$ is compactified to $[0,+\infty]\times V^+$ and, similarly, the end $(-\infty,0]\times V^-$ is compactified to $[-\infty,0]\times V^-$.

Finally we construct symplectic forms $\overline{\omega}_b$ on $\overline{W}$, parametrized by $b>0$. These will be used later to define areas of pseudo-holomorphic curves in $\overline{W}$. For each $b>0$ consider a function $\phi_b^+:[0,+\infty) \to [1,+\infty)$ such that $(\phi_b^+)'>0$, $\phi_b^+$ agrees with $e^a$ near $0$, and $\phi_b^+(a) \to e^b$ as $a\to+\infty$. Define $\phi_b^-:(-\infty,0] \to (0,1]$ by $\phi^-_b(a)=(\phi^+_b(-a))^{-1}$. It follows that $\phi^-_b$ agrees with $e^a$ near $0$, $(\phi_b^-)'>0$, $\phi_b^-(a) \to e^{-b}$ as $a\to-\infty$. Define $\overline{\omega}_b$ by
\begin{equation}\label{symp_area_b}
\overline{\omega}_b = \begin{cases} \omega & \text{on} \ W \\ d(\phi_b^+\lambda) & \text{on} \ [0,+\infty)\times V^+ \\ d(\phi_b^-\lambda) & \text{on} \ (-\infty,0] \times V^+ \end{cases} \, .
\end{equation}
according to~\eqref{completion}.

\subsection{Periodic Reeb orbits}

The Reeb vector field $X_\lambda$ of $\lambda$ is uniquely determined by $i_{X_\lambda}d\lambda=0$, $i_{X_\lambda}\lambda=1$. Let us fix a marked point on every periodic trajectory of the flow of $X_\lambda$. This flow is called the Reeb flow and is denoted here by $\phi^t$. By a periodic Reeb orbit we mean a pair $P=(x,T)$ where $x:\R\to V$ is a periodic Reeb trajectory such that $x(0)$ is the marked point, and $T>0$ is a period, not necessarily the primitive one. The set of periodic orbits will be denoted by $\P$. If $T_0>0$ is the primitive period of $x$ then $k = T/T_0 \in \N$ is called the \textit{multiplicity} of~$P$. The contact form $\lambda$ is said to be \textit{nondegenerate} if $d\phi^T|_{x(0)} : \xi_{x(0)} \to \xi_{x(0)}$ does not have eigenvalue $1$ for all $P=(x,T) \in \P$. 

We assume in the rest of this paper that $\lambda$ is nondegenerate since it suffices to prove Theorem~\ref{main2} in this case.

\subsection{Asymptotic operators}

Associated to any periodic Reeb orbit $P = (x,T)$ and any $d\lambda$-compatible complex structure $J:\xi\to\xi$ there is an unbounded operator on $L^2(x(T\cdot)^*\xi)$
\begin{equation*}
\eta \mapsto J(-\nabla_t\eta+T\nabla_\eta X_\lambda)
\end{equation*}
where $\nabla$ is a symmetric connection on $TV$ and $\nabla_t$ denotes the associated covariant derivative along the loop $t \in \R/\Z \mapsto x(Tt)$. This is called the \textit{asymptotic operator}, which turns out not to depend on the choice of $\nabla$. It is self-adjoint when $L^2(x(T\cdot)^*\xi)$ is equipped with the inner-product
\begin{equation*}
(\eta,\zeta) \mapsto \int_{\R/\Z} d\lambda(x(Tt))(\eta(t),J(x(Tt))\zeta(t)) \ dt
\end{equation*}
Its spectrum is discrete, consists of eigenvalues whose geometric and algebraic multiplicities coincide, accumulate at $\pm\infty$. Since $\lambda$ is nondegenerate, $0$ is not an eigenvalue of the asymptotic operators associated to all pairs $(P,J)$.

\subsection{Conley-Zehnder indices}\label{ssec_CZ}

The eigenvectors of the asymptotic operator associated to any $(P=(x,T),J)$ are nowhere vanishing sections of $x(T\cdot)^*\xi$ since these solve linear ODEs. Hence they have well-defined winding numbers with respect to a $d\lambda$-symplectic trivialization $\sigma$ of $x(T\cdot)^*\xi$. The winding number is independent of the choice of eigenvector of a given eigenvalue. This allows us to talk about the winding number $\wind_\sigma(\nu)$ of an eigenvalue $\nu$. For every $k\in\Z$ there are precisely two eigenvalues (counted with multiplicity) satisfying $\wind_\sigma = k$ and, moreover, $\nu_1\leq\nu_2 \Rightarrow \wind_\sigma(\nu_1)\leq\wind_\sigma(\nu_2)$. Given any $\delta \in \R$ we set
\begin{equation*}
\begin{aligned}
\alpha_{\sigma}^{<\delta}(P) &= \max \ \{ \wind_\sigma(\nu) \mid \nu \ \text{eigenvalue}, \ \nu<\delta \} \\
\alpha_{\sigma}^{\geq\delta}(P) &= \min \ \{ \wind_\sigma(\nu) \mid \nu \ \text{eigenvalue}, \ \nu\geq\delta \} \\
p_{\sigma,\delta}(P) &= \alpha_{\sigma}^{\geq\delta}(P) - \alpha_{\sigma}^{<\delta}(P)\end{aligned}
\end{equation*}
These numbers do not depend on $J$. Finally we consider the constrained Conley-Zehnder index
\begin{equation}
\CZ_\sigma^\delta(P) = 2 \alpha_{\sigma}^{<\delta}(P) + p_{\sigma,\delta}(P)
\end{equation}
Moreover, two trivializations have a relative winding number and the associated Conley-Zehnder indices differ by twice the relative winding number.

If $D_0$ is a capping disk for the periodic Reeb orbit $P_0=(x_0,T_0)$ in $V^+$, then there is a unique (up to homotopy) $d\lambda$-symplectic trivializing frame $\sigma$ of $x_0(T_0\cdot)^*\xi$ that can be completed with $\{Y,X_\lambda\}$ on $\partial D_0$ to a global $\omega$-symplectic trivializing frame of $TW$ along $D_0$. The \textit{Conley-Zehnder index of $P_0$ relative to $D_0$} is defined to be $\CZ_\sigma^0(P_0)$.

\subsection{Pseudo-holomorphic curves}

Choose a $d\lambda$-compatible complex structure $J$ on~$\xi$. Following Hofer~\cite{93}, we define an almost complex structure $\jtil$ on $\R\times V$ by
\begin{equation}\label{R_inv_J}
\jtil : \partial_a \mapsto X_\lambda \qquad\qquad \jtil|_\xi = J
\end{equation}
where $X_\lambda$ and $\xi$ are seen as $\R$-invariant objects in $\R\times V$. Then $\jtil$ is $\R$-invariant and compatible with any symplectic form $d(\phi(a)\lambda)$ where $\phi,\phi'>0$. In particular, it is compatible with $d(e^a\lambda)$. Consider a closed Riemann surface $(S,j)$, a finite set $\Gamma \subset S$ and a $\jtil$-holomorphic map $\util = (a,u) : (S\setminus\Gamma,j) \to (\R\times V,\jtil)$ satisfying a finite-energy condition $$ 0< E(\util) = \sup_{\phi} \int_{S\setminus\Gamma} \util^*d(\phi\lambda) < \infty $$ where the supremum is taken over the set of $\phi:\R \to [0,1]$ satisfying $\phi'\geq0$. The number $E(\util)$ is called the \textit{Hofer energy}. The points of $\Gamma$ are called \textit{punctures}. A puncture $z\in\Gamma$ is called \textit{positive} or \textit{negative} if $a(w)\to+\infty$ or $a(w)\to-\infty$ when $w\to z$, respectively. It is called \textit{removable} if $\limsup |a(w)| < \infty$ when $w\to z$. It turns out that every puncture is positive, negative or removable, and that $\util$ can be smoothly extended across a removable puncture; see~\cite{93}.

\begin{remark}[Holomorphic polar coordinates]
Let $z\in\Gamma$ and let $K$ be a conformal disk centred at $z$, i.e. there is a biholomorphism $\varphi : (K,j,z) \to (\D,i,0)$. Then $K\setminus\{z\}$ admits positive holomorphic polar coordinates $(s,t) \in [0,+\infty) \times \R/\Z$ defined by $(s,t) \simeq \varphi^{-1}(e^{-2\pi(s+it)})$, and negative holomorphic polar coordinates $(s,t) \in (-\infty,0] \times \R/\Z$ defined by $(s,t) \simeq \varphi^{-1}(e^{2\pi(s+it)})$.
\end{remark}

Under the standing assumption that $\lambda$ is nondegenerate we have:

\begin{theorem}[HWZ~\cite{props1}]
Suppose that $z\in\Gamma$ is a nonremovable puncture, and $(s,t)$ are positive holomorphic polar coordinates at $z$. There exists $P=(x,T) \in \P$ and $d\in\R$ such that $u(s,t) \to x(\epsilon Tt+d)$ in $C^\infty(\R/\Z,V)$ as $s\to+\infty$, where $\epsilon = \pm1$ is the sign of the puncture.
\end{theorem}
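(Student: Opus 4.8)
The plan is to follow the now-classical analysis of Hofer and of Hofer--Wysocki--Zehnder; we only sketch it, since the precise argument is carried out in~\cite{props1}. Work in positive holomorphic polar coordinates $(s,t)\in[s_0,+\infty)\times\R/\Z$ near $z$ and assume the puncture is positive, so $a(s,t)\to+\infty$ (the negative case follows by reversing the $\R$-direction). The equation $\partial_s\util+\jtil(\util)\partial_t\util=0$ becomes the system $a_s=\lambda(u)u_t$, $a_t=-\lambda(u)u_s$, $\pi u_s+J(u)\pi u_t=0$, where $\pi:TV\to\xi$ is the projection along $X_\lambda$; one computes $\Delta a=|\pi u_s|_J^2\geq0$, so $a$ is subharmonic, and $u^*d\lambda=|\pi u_s|_J^2\,ds\wedge dt\geq0$ with total integral at most $E(\util)<\infty$. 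The first goal is a uniform gradient bound $|\nabla\util|\leq C$ on $[s_0,+\infty)\times\R/\Z$. If it fails, pick $s_n\to+\infty$, $t_n$ with $R_n:=|\nabla\util(s_n,t_n)|\to\infty$, rescale $\util_n(w):=\util(s_n+w/R_n)$ after subtracting the $\R$-component, and use elliptic estimates and finiteness of the Hofer energy to extract in the $C^\infty_{loc}$-limit a nonconstant finite-energy plane or cylinder. By Hofer's lemma such a limit has a nonconstant periodic Reeb orbit as asymptotic limit, hence carries at least $\inf\{T:(x,T)\in\P\}>0$ of $d\lambda$-energy; but this energy is captured on arbitrarily short necks near $z$, where $\int u^*d\lambda\to0$. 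This contradiction yields the gradient bound, and then elliptic bootstrapping gives uniform $C^\infty_{loc}$ bounds on the end.

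Next, the \emph{asymptotic charge}
\begin{equation*}
T:=\lim_{s\to+\infty}\int_{\{s\}\times\R/\Z}u^*\lambda
\end{equation*}
exists and is $\geq0$: its $s$-derivative is $\int_{\{s\}\times\R/\Z}|\pi u_s|_J^2\,dt\geq0$, and the primitive is bounded above by $E(\util)$. If $T=0$, a standard argument shows $\util$ is bounded near $z$ and extends smoothly, contradicting nonremovability; hence $T>0$. Since $\int_{[s,\infty)\times\R/\Z}u^*d\lambda\to0$, we get $\pi u_s,\pi u_t\to0$ in $L^2$ on long cylinders, and using the subharmonicity of $a$ together with the gradient bound one upgrades this to $a_s\to T$ and $\pi u_t\to0$ uniformly in $t$ along a sequence $s_n\to\infty$. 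Along such a sequence the loops $t\mapsto u(s_n,t)$ have $u_t=\lambda(u)u_t\,X_\lambda(u)+\pi u_t\to T\,X_\lambda(\cdot)$, so by Arzel\`a--Ascoli they converge in $C^\infty$ to a loop solving $v'=T\,X_\lambda(v)$, i.e. $v(t)=x(Tt+d)$ for some $P=(x,T)\in\P$ and $d\in\R$; moreover $a_s\to T$ gives $a(s,t)=Ts+O(1)$.

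The main obstacle is promoting subsequential convergence to convergence as $s\to+\infty$ and ruling out a phase ``drift'' of $u(s,\cdot)$ along the limiting orbit. Here nondegeneracy of $\lambda$ is essential: in the relevant action range $\P$ is a discrete set, so the subsequential limit orbit is isolated; combined with the $C^0$-closeness just established and a connectedness argument in the variable $s$, this pins down a single asymptotic orbit $P=(x,T)$. To control the phase one shows $u(s,\cdot)$ eventually lies in a fixed tubular neighborhood of $x$, writes $u$ there as an exponential graph over the orbit, and linearizes the Cauchy--Riemann equation; since the asymptotic operator of $(P,J)$ is invertible (again by nondegeneracy) it has a spectral gap, which forces exponential decay of $u(s,\cdot)-x(\epsilon Tt+d)$ in every $C^k$-norm and simultaneously fixes $d$ as $s\to+\infty$. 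This gives the asserted $C^\infty(\R/\Z,V)$-convergence; all details are in~\cite{props1} and we use only the statement as quoted.
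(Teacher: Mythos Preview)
The paper does not prove this theorem: it is stated as a result of Hofer--Wysocki--Zehnder and attributed to~\cite{props1}, with no proof or sketch given; the paper uses it as a black box. Your proposal is therefore not comparable to any argument in the paper. That said, your sketch is a faithful outline of the classical HWZ argument (bubbling-off to get gradient bounds, existence of the asymptotic charge $T>0$, subsequential convergence to a closed Reeb orbit, and then nondegeneracy plus the spectral gap of the asymptotic operator to upgrade to full $C^\infty$-convergence with fixed phase), and you explicitly defer all details to~\cite{props1}, which is exactly what the paper does implicitly by citing the result.
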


The orbit $P$ is called the \textit{asymptotic limit} of $\util$ at $z$. Results by HWZ~\cite{props1}, further refined in~\cite{sie_CPAM}, explain that the asymptotic behaviour of a finite-energy surface at a nonremovable puncture can be described in terms of asymptotic operators. To explain this point we need to introduce some notation. Let $P=(x,T) \in \P$ and $T_0>0$ be the primitive period of $x$. Denote the coordinates on $\R/\Z\times \D$ by $(\vartheta,z=x_1+ix_2)$, and set $\alpha_0 = d\vartheta+x_1dx_2$. A Martinet tube for $P$ is a diffeomorphism $$ \Psi:N\to\R/\Z\times \D $$ defined on a smooth compact neighborhood $N$ of $x(\R)$ such that: 
\begin{itemize}
\item[(MT1)] $\Psi(x(T_0\vartheta)) = (\vartheta,0)$ for all $\vartheta \in \R/\Z$.
\item[(MT2)] $\lambda|_N = \Psi^*(f\alpha_0)$, where $f : \R/\Z\times \D \to (0,+\infty)$ is smooth and satisfies $f(\vartheta,0) = T_0$, $df(\vartheta,0) = 0$ for all $\vartheta \in \R/\Z$.
\end{itemize}

Let $\util=(a,u):(S,\Gamma,j) \to (\R\times V,\jtil)$ be a finite-energy surface, and $z\in\Gamma$ be a nonremovable puncture where $\util$ is asymptotic to $P=(x,T)$. Let $(s,t)$ be positive or negative holomorphic polar coordinates at $z$ according to whether $z$ is a positive or negative puncture. We may assume, without loss of generality, that $u(s,0) \to x(0)$ as $|s|\to+\infty$. Fix any Martinet tube $\Psi:N\to\R/\Z\times \D$ for $P$, let $s_0>0$ be such that $|s|\geq s_0 \Rightarrow u(s,t) \in N$, and denote $(\vartheta(s,t),z(s,t)) = \Psi(u(s,t))$ when $|s| \geq s_0$.

\begin{theorem}[HWZ~\cite{props1}, Siefring~\cite{sie_CPAM}]\label{thm_asymptotics}
Let $\epsilon=\pm 1$ be the sign of the puncture~$z$ and $k\in\N$ be the multiplicity of $P$. Either $z(s,t)$ vanishes identically, or there is $r>0$ and an eigenvalue $\nu$ of the asymptotic operator associated to $(P,J)$ such that the following holds. There exists $c\in\R$ such that 
\begin{equation*}
\lim_{|s| \to +\infty} e^{r|s|}| D^\beta[a(s,t) - Ts - c] |= 0 \qquad\quad \forall D^\beta = \partial^{\beta_1}_s\partial^{\beta_2}_t
\end{equation*}
and if $\tilde\vartheta$ is a lift of $\vartheta$ then there exists $d\in\R$ such that
\begin{equation*}
\lim_{|s| \to +\infty} e^{r|s|} | D^\beta[\tilde\vartheta(s,t) - kt] | = 0 \qquad \forall D^\beta = \partial^{\beta_1}_s\partial^{\beta_2}_t
\end{equation*}
The eigenvalue $\nu$ satisfies $\epsilon\nu<0$, and there exists an eigenvector $e(t)$ of $\nu$ and a $\C$-valued smooth function $R(s,t)$ defined for $|s|\geq s_0$ such that
\begin{equation*}
z(s,t) = e^{\nu s} \left( \hat e(t) + R(s,t) \right) \qquad \lim_{|s| \to +\infty} e^{r|s|}  |D^\beta R(s,t)| = 0 \qquad \forall D^\beta = \partial^{\beta_1}_s\partial^{\beta_2}_t
\end{equation*}
where $(0,\hat e(t)) = d\Psi|_{x(Tt)} \cdot e(t)$.
\end{theorem}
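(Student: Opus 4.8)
The plan is to reduce the statement to an asymptotic analysis of the ``transverse'' component of $\util$ near $z$, following Hofer--Wysocki--Zehnder and its refinement by Siefring. Fix positive or negative holomorphic polar coordinates $(s,t)$ at $z$ as in the hypotheses and use the Martinet tube $\Psi$ to write $\Psi\circ u = (\vartheta(s,t), z(s,t))$ for $|s|\geq s_0$. Because of the normalization $f(\vartheta,0)=T_0$, $df(\vartheta,0)=0$ in (MT2), writing out the equation $\bar\partial_{\jtil}\util=0$ in these coordinates shows that $z$ solves a perturbed Cauchy--Riemann equation of the form
\[
\partial_s z + i\,\partial_t z + S(s,t)\,z = N(s,t),
\]
where $S(s,t)$ is symmetric, $N(s,t)=O(|z|^2)+O(|z|\,|\partial z|)$, and $S(s,t)$ converges as $|s|\to\infty$, uniformly in $t$ together with $t$-derivatives, to a loop $S_\infty(t)$ for which the operator $\eta \mapsto -i\,\partial_t\eta - S_\infty(t)\eta$ on $L^2(\R/\Z,\C)$ is conjugate, via $d\Psi$, to the asymptotic operator $\A_P$ of $(P,J)$. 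The previously stated convergence theorem already provides $u(s,t)\to x(\epsilon Tt + d_0)$, hence in particular $z(s,t)\to 0$ uniformly as $|s|\to\infty$; the content of the theorem is to turn this into the quantitative statements claimed.

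The first and technically central step is exponential decay. Set $g(s) = \tfrac12\int_{\R/\Z}|z(s,t)|^2\,dt$ on the range of $s$ where $z(s,\cdot)\not\equiv 0$. Differentiating twice and using the equation, self-adjointness of $\A_P$, and nondegeneracy --- which guarantees that $\operatorname{Spec}(\A_P)$ is discrete and bounded away from $0$ --- one derives a differential inequality $g''(s)\geq (c^2-\rho(s))\,g(s)$ with $c>0$ a spectral-gap constant and $\rho(s)\to 0$. Since $g$ is bounded, an elementary comparison argument forces the dichotomy in the theorem: either $z(s,\cdot)\equiv 0$ for all large $|s|$, in which case unique continuation for the perturbed Cauchy--Riemann operator gives $z\equiv 0$ on the whole punctured neighborhood of $z$ --- the first alternative --- or $g(s)\leq C e^{-2c|s|}$. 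In the latter case interior elliptic estimates for $\bar\partial$ upgrade this $L^2$-decay to exponential decay of $z$ and all of its derivatives, possibly at a smaller rate.

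The second step, due to Siefring, pins down the leading term. Regard $s\mapsto \zeta(s):=z(s,\cdot)$ as a path in a Sobolev space and rewrite the equation as $\zeta'(s)=\A_P\,\zeta(s)+\bigl(S_\infty-S(s,\cdot)\bigr)\zeta(s)+\widetilde N(s)$, where the last two terms are exponentially small relative to $\|\zeta(s)\|$ by the previous step. For a positive puncture one studies the quotient $\mathbf{a}(s) = -\langle \A_P\zeta(s),\zeta(s)\rangle/\|\zeta(s)\|^2$: the decay estimates control the perturbations, so $\mathbf{a}(s)$ has a limit $\nu$ as $s\to+\infty$, and $\nu$ is necessarily an eigenvalue of $\A_P$ with $\epsilon\nu<0$ (the sign being dictated by $\zeta(s)\to0$ in the relevant direction). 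Decomposing $\zeta$ along the $\A_P$-invariant subspaces for eigenvalues $<\nu$, $=\nu$, $>\nu$ and running Gronwall-type estimates on each summand shows that the components transverse to the $\nu$-eigenspace decay strictly faster than $e^{\nu s}$, while the $\nu$-component is asymptotic to $e^{\nu s}e(t)$ for a fixed eigenvector $e$. Setting $(0,\hat e(t)) = d\Psi|_{x(Tt)}\cdot e(t)$ and defining $R$ by the displayed identity yields the asserted estimate for $R$.

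Finally, the components $a$ and $\vartheta$ are recovered from the remaining Cauchy--Riemann equations once $z$ is controlled. There $\partial_s(a-Ts)$ and $\partial_s(\tilde\vartheta-kt)$ are expressed through quantities that are at least linear in $(z,\partial z)$, hence exponentially small; integrating outward --- from $s$ to $+\infty$ for a positive puncture, from $s$ to $-\infty$ for a negative one --- using that $a-Ts$ and $\tilde\vartheta-kt$ have limits (the $kt$ term reflecting that $x$ is $k$-periodic, i.e.\ $P$ has multiplicity $k$), together with one more elliptic bootstrap, gives the stated exponential convergence of $a(s,t)-Ts-c$ and $\tilde\vartheta(s,t)-kt$ and their derivatives. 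The main obstacle is the second step above: extracting \emph{any} exponential decay from only the finite-energy hypothesis and $C^0$-convergence, and then \emph{stabilizing} that rate to an eigenvalue of $\A_P$ --- a priori it could fail to converge --- is the analytic heart of the argument, and it is exactly the differential-inequality/spectral-gap mechanism, followed by the spectral decomposition, that closes both gaps.
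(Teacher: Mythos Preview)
The paper does not prove this theorem; it is stated with attribution to \cite{props1} and \cite{sie_CPAM} and used as a black box. Your sketch is a faithful outline of the argument in those references --- the differential-inequality/spectral-gap mechanism for exponential decay (HWZ), the spectral decomposition to pin down the leading eigenvalue and eigenvector (HWZ, sharpened by Siefring), and the recovery of the $a$- and $\vartheta$-components by integration and bootstrap --- so there is nothing to compare against in the present paper.
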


\begin{remark}\label{rmk_asymp_behaviour}
If $z(s,t)$ does not vanish identically then we say that $\util$ has a nontrivial asymptotic behaviour at $z$. In this case we call $\nu$ and $e(t)$ the associated asymptotic eigenvalue and eigenvector, respectively. Otherwise $\util$ is said to have trivial asymptotic behaviour at $z$. 
\end{remark}

Consider almost complex structures $\jbar$ on $\overline{W}$ that are 
$\omega$-compatible on $W$, and agree with
an $\R$-invariant $\jtil$ as in~\eqref{R_inv_J} on $[0,+\infty)\times V^+ \cup (-\infty,0] \times V^-$. It follows that $\jbar$ is $\overline{\omega}$-compatible. As in~\cite{93},~\cite{sft_comp} we look at a closed Riemann surface $(S,j)$, a finite set $\Gamma \subset S$ and a $\jbar$-holomorphic map $\util:(S\setminus\Gamma,j) \to (\overline{W},\jbar)$ satisfying a finite-energy condition 
$$ 
\begin{aligned} 
0< E(\util) = \int_{\util^{-1}(W)} \util^*\omega \ &+ \ \sup_{\phi} \int_{\util^{-1}([0,+\infty)\times V^+)} \util^*d(\phi\lambda) \\ &+ \int_{\util^{-1}((-\infty,0] \times V^-)} \util^*d(\phi\lambda) < \infty 
\end{aligned}
$$ where the supremum is taken over the set of smooth $\phi : \R \to [0,1]$ satisfying $\phi'\geq0$. The finite-energy condition implies that punctures are either removable or they behave like the nonremovable punctures explained before. If $(s,t)$ are positive holomorphic polar coordinates at a positive puncture $z\in\Gamma$ then $\util(s,t) \in [0,+\infty)\times V^+$ when $s\gg1$, and if one writes $\util=(a,u)$ for $s\gg1$ then $a(s,t) \to+\infty$ as $s\to+\infty$. There is a similar behaviour at negative punctures, we leave details to the reader. Moreover, if $\lambda$ is nondegenerate then there is an asymptotic limit $P$ at a nonremovable puncture, and all the conclusions of Theorem~\ref{thm_asymptotics} hold. In particular, we can talk about nontrivial \textit{versus} trivial asymptotic behaviour and asymptotic eigenvalues as in Remark~\ref{rmk_asymp_behaviour}.

\subsection{Fast planes and ${\wind}_\infty$}
\label{ssec_fast_planes}

Denote by $(\overline{\C} = \C \cup \{\infty\},i)$ the Riemann sphere, and consider a finite-energy plane $\util:(\C,i) \to (\overline{W},\overline{\omega})$ with a positive puncture~at $\infty$. Then $$ \util(\C\setminus B_R(0)) \subset [0,+\infty)\times V^+ $$ when $R>0$ is large enough. Hence, we can write $\util=(a,u)$ in components on $\C\setminus B_R(0)$ and get an inclusion of vector bundles
\begin{equation*}
(u|_{\C\setminus B_R(0)})^*\xi \subset (\util|_{\C\setminus B_R(0)})^*T\overline{W}
\end{equation*}
We can choose a trivializing $d\lambda$-symplectic frame $\{e_1,e_2\}$ of $(u|_{\C\setminus B_R(0)})^*\xi$ such that the frame $\{\partial_a,X_\lambda,e_1,e_2\}$ extends to a trivializing $\overline{\omega}$-symplectic frame of $\util^*T\overline{W}$. This frame is unique up to homotopy. Let the asymptotic limit of $\util$ at $\infty$ be $P = (x,T)$. If $|z|\gg1$ then we can write $\util(z) = (a(z),u(z)) \in [0,+\infty)$ in components and assume, without loss of generality, that $u(R) \to x(0)$ as $R\to+\infty$. Using the asymptotic behavior we may further assume, up to homotopy, that the frame $\{e_1,e_2\}$ along the loop $u(Re^{i2\pi t})$ converges to a $d\lambda$-symplectic frame $\sigma$ of $x(T\cdot)^*\xi$ as $R\to+\infty$. The resulting frame $\sigma$ is determined by $\util$ up to homotopy, and we shall say that it is \textit{aligned with $\util$}. We denote
\begin{equation}
\CZ(\util) = \CZ^0_\sigma(P)
\end{equation}
where $\sigma$ is aligned with $\util$.

To define $\wind_\infty(\util)$ we follow~\cite{props2}. If $\util$ has nontrivial asymptotic behavior at~$\infty$, $e(t)$ is the asymptotic eigenvector given by Theorem~\ref{thm_asymptotics}, and $\hat e(t) \in \C^*$ is determined by representing $e(t)$ in a frame aligned with $\util$, then we set
\begin{equation*}
\wind_\infty(\util) = \frac{\theta(1)-\theta(0)}{2\pi} \in \Z
\end{equation*}
where $\theta(t)$ is a continuous choice of argument of $\hat e(t)$. If $\util$ has trivial asymptotic behaviour at $\infty$ then we set $\wind_\infty(\util) = -\infty$.

The plane $\util$ is said to be \textit{fast} if $\wind_\infty(\util)\leq1$. Fast planes in symplectizations were introduced in~\cite{fast}. Theorem~\ref{thm_asymptotics} implies that if $\util$ has nontrivial asymptotic behaviour, and if $\nu$ is the asymptotic eigenvalue, then 
\begin{equation*}
\nu < \delta \ \Rightarrow \ \wind_\infty(\util) \leq \alpha^{<\delta}_\sigma(P)
\end{equation*}
where $\sigma$ is aligned with $\util$.

Finally, we need to recall invariants $\wind_\pi$ and $\wind_\infty$ introduced in~\cite{props2} for a finite-energy curve in $(\R\times V^\pm,\jtil)$. Suppose that the domain is connected and the curve $\vtil=(b,v)$ is not a (possibly branched) cover of a trivial cylinder. In this case the curve has nontrivial asymptotic behaviour at every puncture; see Remark~\ref{rmk_asymp_behaviour}. Choose a $d\lambda$-symplectic trivializing frame of $v^*\xi$ that extends to a collection $\sigma$ of trivializing frames over the asymptotic limits. The set of punctures splits as $\Gamma^+ \cup \Gamma^-$ into positive and negative punctures, and for each $z \in \Gamma^\pm$ we denote by $\nu_z$ the asymptotic eigenvalue of $\vtil$ at the puncture $z$. Denote $\wind_\infty(\vtil,z,\sigma) = \wind_\sigma(\nu_z)$. HWZ~\cite{props2} define
\begin{equation}
\wind_\infty(\vtil) = \sum_{z\in\Gamma^+} \wind_\infty(\vtil,z,\sigma) - \sum_{z\in\Gamma^-} \wind_\infty(\vtil,z,\sigma)
\end{equation}
\begin{equation}
\wind_\pi(\vtil) = \wind_\infty(\vtil) - \chi + \#\Gamma^+ + \#\Gamma^-
\end{equation}
where $\chi$ is the Euler characteristic of the underlying closed Riemann surface. It is proved in~\cite{props2} that
\begin{equation}
\wind_\pi(\vtil) \geq 0 \qquad \text{and} \qquad \wind_\pi(\vtil) = 0 \Rightarrow \text{$v$ is an immersion.}
\end{equation}

\subsection{Fredholm indices and moduli spaces}

For a given $P \in \P$ we denote by
\begin{equation}
\label{defn_Mfast}
\Mfast(P,\jbar)
\end{equation}
the set of equivalence classes of embedded fast finite-energy planes $\util:(\C,i) \to (\overline{W},\jbar)$ with a positive puncture at $\infty$ where it is asymptotic to $P$ satisfying $\CZ(\util)\geq 3$. Here two such planes $\util_0,\util_1$ are said to be equivalent if there exist $A\in\C^*$ and $B\in\C$ such that $\util_1(z)=\util_0(Az+B)$. An element of $\Mfast(P,\jbar)$ represented by a plane $\util$ will be denoted by $[\util]$. Similarly one can consider $\Mfast_1(P,\jbar)$ the set of equivalence classes of pairs $(\util,z)$ where $\util:(\C,i)\to(\overline{W},\jbar)$ is an embedded fast finite-energy plane with a positive puncture, satisfying $\CZ(\util)\geq3$, and $z\in\C$. Two pairs $(\util_0,z_0)$, $(\util_1,z_1)$ are said to be equivalent if there exists $(A,B)\in\C^*\times\C$ such that $\util_1(z)=\util_0(Az+B)$ holds for all $z \in \C$, and that $z_0=Az_1+B$.

We can set up a Fredholm theory for $\Mfast(P,\jbar)$ modeled on sections of the normal bundle, using Sobolev spaces or Hölder spaces. Consider an element of $\Mfast(P,\jbar)$ represented by a fast embedded plane $\util$. One can look at sections of the normal bundle of $\util(\C)$ with exponential decay faster than $\delta<0$, where $\delta$ is placed precisely in the spectral gap between eigenvalues of the asymptotic operator satisfying $\wind_\sigma=1$ and $\wind_\sigma=2$, where $\sigma$ is a $d\lambda$-symplectic trivialization of $x(T\cdot)^*\xi$ aligned with $\util$ in the sense explained in \S~\ref{ssec_fast_planes}. This is possible precisely if $\CZ(\util) = \CZ^0_\sigma(P)\geq 3$. Note that $\alpha^{<\delta}_\sigma(P) = 1$. The Fredholm index of the linearization $D_{\util}$ of $\bar\partial_{\jbar}$ at $\util$ restricted to this space of sections is
\begin{equation}
{\rm ind}_\delta(\util) = \CZ^\delta_\sigma(P) - 1 = 3-1 = 2.
\end{equation}

\begin{remark}
The weight $\delta<0$ will be different for two embedded fast planes asymptotic to~$P$ that form a sphere where $c_1(TW,\omega)$ does not vanish.
\end{remark}

An important consequence is that we have \textit{automatic transversality}, i.e. $D_{\util}$ at an embedded fast plane $\util$ is always a surjective Fredholm operator. This can be proved by an indirect argument as follows. There is no loss of generality to deform the normal bundle so that it coincides with $u^*\xi$ over $\C\setminus B_R(0)$, $R\gg1$. A global trivializing section of the normal bundle then induces, up to homotopy, a $d\lambda$-symplectic trivialization $\sigma_N$ of $x(T\cdot)^*\xi$ which winds $+1$ with respect to $\sigma$. This means that $\wind_{\sigma_N}(\mu) = \wind_{\sigma}(\mu)-1$ for every eigenvalue $\mu$ of the asymptotic operator. Moreover, a nontrivial section $\zeta\in\ker D_{\util}$ admits an asymptotic behavior governed by an eigensection of the asymptotic operator associated to an eigenvalue $\nu<\delta$, see Theorem~6.1 in~\cite{fast} or Theorem~A.1 in~\cite{sie_CPAM}. Hence, $\zeta$ does not vanish near $\infty$ and the total algebraic count of zeros of $\zeta$ is equal to $\wind_{\sigma_N}(\nu) = \wind_{\sigma}(\nu)-1 \leq \alpha^{<\delta}_\sigma(P)-1=1-1=0$. But the equation $D_{\util}\zeta=0$ allows us to use Carleman's similarity principle to say that zeros are isolated and count positively. The important conclusion that $\zeta$ never vanishes. Since the Fredholm index is $2$, we would find $3$ linearly independent sections of the kernel if the linearization is not surjective. But the normal bundle is two-dimensional, hence a nontrivial linear combination of them would have to vanish at some point, contradiction.

The consequence of the above arguments and remarks is that $\Mfast(P,\jbar)$ is a smooth manifold of dimension two when equipped with the topology inherited from the functional analytic set-up used for the Fredholm theory, for any $\jbar$, provided $\lambda$ is nondegenerate up to action $T$. Similarly, $\Mfast_1(P,\jbar)$ is a smooth manifold of dimension four, for any $\jbar$, provided $\lambda$ is nondegenerate up to action~$T$.

\begin{remark}
It turns out that, under our standing assumption that $\lambda$ is nondegenerate, the topologies on $\Mfast(P,\jbar)$ and on $\Mfast_1(P,\jbar)$ inherited from the functional analytic set-up used for the Fredholm theory coincides with the topology of $C^\infty_{\rm loc}$-convergence. There are situations where this can be proved dropping the assumption that $\lambda$ is nondegenerate~\cite{hryn_jsg},\cite{elliptic}.
\end{remark}

By the above remark and automatic transversality, it follows that for every $\jbar$ the space $\Mfast_1(P,\jbar)$ is a smooth $4$-dimensional manifold equipped with a smooth evaluation map
\begin{equation}\label{ev_map}
\ev:\Mfast_1(P,\jbar) \to \overline{W},\qquad \ev[\util,z]=\util(z).
\end{equation}

\begin{remark}
\label{rmk_ev_submersion}
The map $\ev$ is a local diffeomorphism: this is a direct consequence of the facts that planes in $\Mfast_1(P,\jbar)$ are embedded, and that sections in the kernel of the linearized Cauchy-Riemann operator with the previously described exponential decay never vanish; see the argument for this last claim above.
\end{remark}

\subsection{Intersection theory}

Our first goal here is to prove the following statement.

\begin{lemma}\label{lemma_intersections_fast_planes}
If $[\util],[\vtil]$ belong to the same connected component of $\Mfast(P,\jbar)$ then $\util(\C) \cap \vtil(\C) \neq \emptyset$ if, and only if, $[\util]=[\vtil]$.
\end{lemma}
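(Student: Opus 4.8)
The plan is to use positivity of intersections together with the fact that the algebraic self-intersection number of a fast plane in this setting is zero, so that any geometric intersection of two distinct planes in the same connected component would force a contradiction. First I would observe that two fast embedded planes $\util,\vtil$ asymptotic to the same orbit $P$ have a well-defined algebraic intersection number in the completion: because both are asymptotic to $P$ with the same multiplicity, Siefring's intersection theory for punctured holomorphic curves (as developed in \cite{sie_CPAM}, and the asymptotic analysis of Theorem \ref{thm_asymptotics}) provides a homotopy-invariant intersection pairing $\util \ast \vtil \in \Z$ that counts interior intersections geometrically plus an asymptotic contribution, and that is invariant as $[\util],[\vtil]$ vary in $\Mfast(P,\jbar)$ — in particular along a path in a fixed connected component. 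The key input is the \emph{relative asymptotic formula}: since both planes are fast ($\wind_\infty \le 1$) and have been fixed to have aligned trivializations with $\alpha^{<\delta}_\sigma(P)=1$, the asymptotic winding of the difference $\util - \vtil$ at $\infty$ is controlled, and the resulting asymptotic intersection defect is non-negative.

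Next I would compute $\util \ast \vtil$ when $[\util]=[\vtil]$, i.e.\ the adjunction-type self-intersection number of a single fast plane. Here the relevant facts are all assembled in the excerpt: $\CZ(\util) \ge 3$, $\wind_\pi(\util) \ge 0$, automatic transversality, and the computation ${\rm ind}_\delta(\util)=2$. Using the normal-bundle description from \S\ref{ssec_fast_planes} and the discussion preceding Remark \ref{rmk_ev_submersion} — a generic section of the normal bundle of $\util(\C)$ decaying faster than $\delta$ never vanishes, because its zero count equals $\wind_{\sigma_N}(\nu) = \wind_\sigma(\nu)-1 \le \alpha^{<\delta}_\sigma(P)-1 = 0$ — one sees that the ``normal self-intersection'' of $\util$ vanishes, and combined with the asymptotic contribution (again governed by the eigenvalue $\nu$ with $\wind_\sigma(\nu)\le 1$) this yields $\util \ast \util = 0$. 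Therefore $\util \ast \vtil = 0$ for any $[\util],[\vtil]$ in the same connected component of $\Mfast(P,\jbar)$.

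Finally I would put the two halves together. If $[\util] \ne [\vtil]$ but $\util(\C) \cap \vtil(\C) \ne \emptyset$, then since $P$ is simple the two images are distinct non-constant curves, so positivity of intersections (interior intersections count with strictly positive multiplicity, and the asymptotic defect is non-negative as noted) gives $\util \ast \vtil \ge 1$, contradicting $\util \ast \vtil = 0$. Hence $\util(\C)\cap\vtil(\C)=\emptyset$ whenever $[\util]\ne[\vtil]$. The converse is immediate: if $[\util]=[\vtil]$ then $\util$ and $\vtil$ differ by a biholomorphism of $\C$, so they have the same image and in particular $\util(\C)\cap\vtil(\C)=\util(\C)\ne\emptyset$.

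I expect the main obstacle to be the careful bookkeeping in the intersection-theoretic input: one must verify that the asymptotic contribution to $\util \ast \vtil$ is non-negative and that it vanishes precisely in the configuration at hand, which requires invoking the relative asymptotic behaviour of two distinct solutions approaching the same orbit (the $\wind_\infty$ and the eigenvalue comparisons from Theorem \ref{thm_asymptotics}) rather than just the single-curve asymptotics. A secondary point to handle with care is the homotopy invariance of $\util \ast \vtil$ along a connected component of $\Mfast(P,\jbar)$ in the non-compact completion $\overline{W}$ — this is where one uses that the weight $\delta$ sits in a fixed spectral gap and that the curves are genuinely asymptotic to $P$, so no intersections escape to the puncture under deformation.
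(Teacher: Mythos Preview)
Your proposal is correct and follows essentially the same route as the paper: define a homotopy-invariant, nonnegative intersection pairing for planes in $\Mfast(P,\jbar)$, compute the self-pairing to be zero via the normal-bundle frame $\sigma_N$, and conclude by positivity. The only notable difference is that the paper does not invoke Siefring's standard pairing $*$ directly but instead builds an explicit $\delta$-weighted version $\util *_\delta \vtil = \mathrm{int}(\util,\vtil^\tau) + \alpha^{<\delta}_\tau(P)$ with $\delta$ in the spectral gap between windings $1$ and $2$; this choice is exactly what makes the self-intersection computation $\util *_\delta \util = 0 + \alpha^{<\delta}_{\sigma_N}(P) = 0$ clean, and you already anticipate this by flagging the role of the weight and the spectral gap.
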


The proof uses a weighted version of the intersection number from~\cite{sief_int}. 
Let us fix $e\in \pi_2(\W_\infty,\{+\infty\}\times P)$ and denote by $$ \Mfast(P,\jbar,e) \subset \Mfast(P,\jbar) $$ the subset of planes representing $e$. 
The weight $\delta<0$ is placed on the special spectral gap of the asymptotic operator of $(P,J)$ as explained before, and can be taken the same for all planes in $\Mfast(P,\jbar,e)$. 
Let $\tau$ be any trivializing $d\lambda$-symplectic frame of $x(T\cdot)^*\xi$. If $\vtil$ belongs to $\Mfast(P,\jbar,e)$ then we can push $\vtil$ near its positive puncture in the direction of $\tau$ to obtain a map $\vtil^\tau$ that does not intersect $\vtil$ near the puncture. More precisely, choose the Martinet tube so that in coordinates $(\vartheta,z=x_1+ix_2)$ the frame $\{\frac{\partial}{\partial {x_1}},\frac{\partial}{\partial {x_2}}\}$ along $P$ does not wind with respect to $\tau$. Use positive holomorphic polar coordinates $(s,t)$ at the puncture to represent as $\vtil(s,t)$ as a map $(a(s,t),\vartheta(s,t),z(s,t))$. Then, with a nondecreasing bump function $\beta:\R\to[0,1]$ supported on $[s_0,+\infty)$, $s_0\gg1$, equal to $1$ near $+\infty$, define $\vtil^\tau$ by $\vtil^\tau(s,t) = (a(s,t),\vartheta(s,t),z(s,t)+\beta(s))$ near the puncture, and $\vtil^\tau=\vtil$ away from the puncture. Finally we define
\begin{equation}
\util*_\delta\vtil = {\rm int}(\util,\vtil^\tau) + \alpha^{<\delta}_\tau(P)
\end{equation}
for $[\util],[\vtil] \in \Mfast(P,\jbar,e)$. Here ${\rm int}$ denotes the intersection number, where $\W$ is oriented by $\overline{\omega}\wedge \overline{\omega}$ and the domains of $\util$ and $\vtil^\tau$ carry the complex orientation. One checks that $\util*_\delta\vtil$ does not depend on the choice of $\tau$. Note that $\util*_\delta\vtil$ depends only on the corresponding classes in $\Mfast(P,\jbar,e)$. Arguing as in~\cite{sief_int} one establishes the following lemma which summarizes some of the main properties of this intersection number.

\begin{lemma}\label{lemma_int_invariance}
The number $\util*_\delta\vtil$ is nonnegative and does not change when $[\util],[\vtil]$ vary continuously on $\Mfast(P,\jbar,e)$. Moreover, if $[\util],[\vtil] \in \Mfast(P,\jbar,e)$ satisfy $\util(\C) \neq \vtil(\C)$ and $\util(\C) \cap \vtil(\C) \neq \emptyset$ then $\util*_\delta\vtil>0$.
\end{lemma}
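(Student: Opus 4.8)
The plan is to identify $\util *_\delta \vtil$ with the \emph{constrained} generalized intersection number of Siefring~\cite{sief_int} adapted to fast planes in $\W_\infty$, and to read off all three assertions from the corresponding facts there; the only work is to transcribe the argument and to keep careful track of the collar near the positive puncture. The algebraic input is standard: if $\util(\C)\neq\vtil(\C)$ then, by Carleman's similarity principle applied to the difference of the two $\jbar$-holomorphic maps, their intersections are isolated and each contributes $+1$ to ${\rm int}$; the same positivity holds for $\util$ against $\vtil^\tau$ on the region where $\vtil^\tau=\vtil$ is still holomorphic. Hence the entire subtlety of the Lemma is localized in the collar near $\infty$ where $\vtil^\tau$ has been pushed off $\vtil$ in the $\tau$-direction, and there both curves are governed by the asymptotic formula of Theorem~\ref{thm_asymptotics} together with its relative refinement from~\cite{sie_CPAM} (the difference of two finite-energy ends asymptotic to the same orbit is again controlled by an eigenfunction of the asymptotic operator).

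For homotopy invariance, I would argue that, as $[\util],[\vtil]$ vary in $\Mfast(P,\jbar,e)$, the count ${\rm int}(\util,\vtil^\tau)$ can jump only when intersection points escape through, or emerge from, the puncture. Because every member of $\Mfast(P,\jbar,e)$ is fast (so $\wind_\infty\leq 1$) and the Fredholm weight $\delta<0$ is pinned in the spectral gap between winding $1$ and winding $2$ — the same weight for all members, since the relative class $e$ is fixed — the asymptotic eigenfunctions of $\util$ and $\vtil$ at $\infty$ have $\tau$-winding at most $\alpha^{<\delta}_\tau(P)$, and the relative asymptotic formula then shows that the number of intersections hidden at the puncture is captured exactly by the additive correction $\alpha^{<\delta}_\tau(P)$. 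Equivalently one phrases $\util *_\delta\vtil$ as the intersection pairing of the relative homology classes determined by $\util,\vtil$ in $\W_\infty$, twisted by the weight $\delta$, which is locally constant. This is also the source of the $\tau$-independence already noted: a unit change of $\tau$ shifts $\alpha^{<\delta}_\tau(P)$ by $\mp 1$ and ${\rm int}(\util,\vtil^\tau)$ by $\pm 1$.

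For nonnegativity, I would write $\util *_\delta\vtil = {\rm int}(\util(\C),\vtil(\C)) + \iota_\infty(\util,\vtil)$, where ${\rm int}(\util(\C),\vtil(\C))$ is the algebraic count of genuine intersections of the two $\jbar$-holomorphic images and $\iota_\infty(\util,\vtil) = {\rm int}(\util,\vtil^\tau) - {\rm int}(\util(\C),\vtil(\C)) + \alpha^{<\delta}_\tau(P)$ is the count of intersections hidden at $\infty$. The first term is a sum of local intersection indices of $\jbar$-holomorphic curves, hence $\geq 0$, and vanishes iff $\util(\C)\cap\vtil(\C)=\emptyset$ or $\util(\C)=\vtil(\C)$. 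For the second term one fixes the Martinet tube so the pushing direction does not wind relative to $\tau$; then the leading asymptotic term of $\util-\vtil^\tau$ near $\infty$ is modeled on an eigenfunction of an eigenvalue $\nu<\delta$, whose $\tau$-winding is at most $\alpha^{<\delta}_\tau(P)$ by the monotonicity $\nu_1\leq\nu_2\Rightarrow\wind_\tau(\nu_1)\leq\wind_\tau(\nu_2)$; this forces the collar to contribute at least $-\alpha^{<\delta}_\tau(P)$ to ${\rm int}(\util,\vtil^\tau)$, i.e. $\iota_\infty(\util,\vtil)\geq 0$. Therefore $\util *_\delta\vtil\geq 0$, and if $\util(\C)\neq\vtil(\C)$ and $\util(\C)\cap\vtil(\C)\neq\emptyset$ then ${\rm int}(\util(\C),\vtil(\C))\geq 1$, so $\util *_\delta\vtil>0$.

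The main obstacle is precisely the collar bookkeeping: proving, uniformly along families, that ${\rm int}(\util,\vtil^\tau)$ restricted to a neighbourhood of the puncture equals $-\alpha^{<\delta}_\tau(P)$ plus the nonnegative hidden-intersection count. This rests on the relative asymptotic analysis of~\cite{props1,sie_CPAM} combined with a winding/degree computation for the difference of the two ends; once that identification is in place, the rest is a routine adaptation of~\cite{sief_int}.
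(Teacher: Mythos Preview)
Your proposal is correct and follows precisely the route the paper indicates: the paper does not give a proof of this lemma beyond the sentence ``Arguing as in~\cite{sief_int} one establishes the following lemma,'' and what you have sketched is exactly that argument --- identifying $\util*_\delta\vtil$ with Siefring's constrained intersection number, invoking positivity of intersections for the interior contribution, and using the relative asymptotic formula from~\cite{sie_CPAM} together with the spectral-gap placement of $\delta$ to control the collar and establish homotopy invariance and nonnegativity of the hidden count $\iota_\infty$.

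Two minor remarks on presentation. First, invoking ``Carleman's similarity principle applied to the difference'' for positivity of intersections is a slight abuse --- the difference of two maps into $\overline{W}$ is not globally defined, and the positivity statement you need is the standard local result for $\jbar$-holomorphic curves (e.g.\ Micallef--White~\cite{MW} or Appendix~E of~\cite{J_book}); of course the local proof does reduce to a Carleman-type argument in coordinates, so the conclusion stands. Second, your decomposition $\util*_\delta\vtil = {\rm int}(\util(\C),\vtil(\C)) + \iota_\infty$ tacitly assumes $\util(\C)\neq\vtil(\C)$, since otherwise ${\rm int}(\util(\C),\vtil(\C))$ is not a finite sum of local indices; the self-case $[\util]=[\vtil]$ is handled either by the explicit computation $\util*_\delta\util=0$ via the global normal frame $\sigma_N$ (as the paper does immediately after the lemma) or by your homotopy invariance together with a small perturbation in the $2$-dimensional moduli space. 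Neither point is a gap in the argument.
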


Using the frame $\sigma_N$ induced by a global trivializing frame of the normal bundle, one computes for any $\util \in \Mfast(P,\jbar,e)$
\begin{equation}\label{int_u_u}
\util*_\delta\util = {\rm int}(\util,\util^{\sigma_N}) + \alpha^{<\delta}_{\sigma_N}(P) = 0+0 = 0
\end{equation}
Lemma~\ref{lemma_intersections_fast_planes} follows as a direct consequence of $\util*_\delta\util=0$ and Lemma~\ref{lemma_int_invariance}. 

\begin{lemma}
\label{lemma_limits_of_fast_planes}
Let $\util_n:(\C,i) \to (\W,\jbar)$ be fast finite-energy planes asymptotic to a simply covered periodic Reeb orbit $P = (x,T)$, that define elements of $\Mfast(P,\bar J,e)$.
Let $\sigma$ be a (unique up to homotopy) $d\lambda$-symplectic trivialization of $x(T\cdot)^*\xi$ aligned with $\util_n$ $\forall n$.
Let $\Gamma \subset \C$ be finite, and assume that $\util_n$ converges in $C^\infty_{\rm loc}(\C\setminus\Gamma)$ to a finite-energy map $\util:(\C \setminus\Gamma,i) \to (\W,\jbar)$ asymptotic to $P$ at~$\infty$.
If $\util$ has a nontrivial asymptotic behavior at $\infty$ with asymptotic eigenvalue $\nu$ then $\wind_\sigma(\nu) \leq 1$. In particular, if $\Gamma=\emptyset$ then $\util$ is fast plane. 
\end{lemma}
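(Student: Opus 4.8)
The strategy is to leverage the weighted intersection number $\util_n *_\delta \util_n = 0$ established in~\eqref{int_u_u}, passing it to the limit. The key observation is that all the $\util_n$ are embedded, do not intersect each other (Lemma~\ref{lemma_intersections_fast_planes}), and represent a fixed class $e$ for which the weight $\delta$ in the special spectral gap (between $\wind_\sigma = 1$ and $\wind_\sigma = 2$) is common. Thus $\alpha^{<\delta}_\sigma(P) = 1$. Suppose for contradiction that the limit curve $\util$ has nontrivial asymptotic behavior at $\infty$ with eigenvalue $\nu$ satisfying $\wind_\sigma(\nu) \geq 2$.

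First I would set up the comparison. Pick $n$ large and consider the intersection number ${\rm int}(\util_n, \util^\tau)$, where $\tau$ is a fixed trivialization and $\util^\tau$ is the pushed-off limit curve near its puncture at $\infty$. Since $\util$ is a $C^\infty_{\rm loc}$-limit of the $\util_n$ away from the finite bubbling set $\Gamma$, and since both $\util_n$ and $\util$ are asymptotic to the \emph{same} simply covered orbit $P$ at $\infty$, the asymptotic analysis of Theorem~\ref{thm_asymptotics} controls the relative approach rates: the difference $\util_n - \util$ near $\infty$ is itself governed (in the limit) by the asymptotic eigenvalue data, so one can count intersections near the puncture in terms of winding numbers. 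The crucial point is that because $\wind_\sigma(\nu) \geq 2 > 1 = \alpha^{<\delta}_\sigma(P)$, the limit curve $\util$ winds ``faster'' around $P$ than the approximating $\util_n$ do relative to the weight. This forces the asymptotic contribution to the relative intersection count between $\util_n$ and $\util^\tau$ to be strictly negative when measured against the weight $\alpha^{<\delta}_\tau(P)$; equivalently, $\util_n$ must cross $\util(\C)$ near $\infty$. Concretely, I would show ${\rm int}(\util_n,\util^\tau) + \alpha^{<\delta}_\tau(P) < \wind_\sigma(\nu) - 1 + (\text{nonnegative geometric intersections in the interior}) $, but the cleaner route is: by positivity of intersections in $\W$, ${\rm int}(\util_n,\util^\tau) \geq 0$ counts honest geometric intersection points, while the weighted quantity $\util_n *_\delta \util$ (defined by the same formula as $*_\delta$, now allowing $\util$ to have extra punctures) is a limit of $\util_n *_\delta \util_m = 0$ and hence equals $0$; but the discrepancy between ${\rm int}(\util_n,\util^\tau) + \alpha^{<\delta}_\tau(P)$ and this limiting value is exactly $\wind_\sigma(\nu) - 1 \geq 1 > 0$, forcing ${\rm int}(\util_n, \util^\tau) \leq -1$, a contradiction with $ {\rm int} \geq 0$.

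The main obstacle is making rigorous the claim that the limit of the intersection numbers $\util_n *_\delta \util_m$ as both planes degenerate to $\util$ (with bubbling at $\Gamma$) equals a weighted self-intersection of $\util$ whose ``defect'' from ${\rm int}(\util_n,\util^\tau)+\alpha^{<\delta}_\tau(P)$ is precisely $\alpha^{<\delta}_\sigma(P) - \wind_\sigma(\nu)$. This is where one must invoke the Siefring intersection theory machinery~\cite{sief_int},~\cite{sie_CPAM} carefully: intersection points can escape to the puncture in the limit, and the bookkeeping of the ``asymptotic intersection number'' at the puncture is governed exactly by comparing the actual asymptotic eigenvalue $\nu$ of $\util$ against the spectral gap data encoded by $\delta$. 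The stability of $*_\delta$ under $C^\infty_{\rm loc}$-convergence (Lemma~\ref{lemma_int_invariance}) applies as long as no intersections escape to $\infty$, but here the jump in winding behavior is precisely what could allow escape; ruling out the inconsistency is the content of the lemma. For the final sentence, if $\Gamma = \emptyset$ then $\util$ is a finite-energy plane, so in particular it is not a cover of a trivial cylinder (it has a positive puncture and its image is not $\R$-invariant, e.g. because it enters $W$ where $\overline\omega$ is not exact, or simply because $P$ is simply covered and the plane has only one puncture), hence it has nontrivial asymptotic behavior at $\infty$; combined with $\wind_\sigma(\nu) \leq 1$ we get $\winfty(\util) \leq \alpha^{<\delta}_\sigma(P) = 1$ by the displayed inequality in \S\ref{ssec_fast_planes}, so $\util$ is fast.
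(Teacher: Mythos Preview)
Your approach via the weighted intersection number $*_\delta$ is genuinely different from the paper's, and the outline is suggestive, but there is a real gap that you yourself identify without resolving. You claim that $\util_n *_\delta \util$ can be defined and equals $\lim_m \util_n *_\delta \util_m = 0$, but Lemma~\ref{lemma_int_invariance} only guarantees invariance of $*_\delta$ when both arguments vary \emph{within} $\Mfast(P,\jbar,e)$. The limit $\util$ is assumed, for contradiction, to have $\wind_\sigma(\nu) \geq 2$, so it is not fast; it may also have additional punctures at $\Gamma$. Precisely in this situation intersections between $\util_n$ and $\util_m^\tau$ can escape to $\infty$ as $m \to \infty$, so the limit of ${\rm int}(\util_n,\util_m^\tau)$ need not equal ${\rm int}(\util_n,\util^\tau)$. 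Your proposed ``defect'' $\wind_\sigma(\nu) - 1$ is morally the number of intersections that escape, but quantifying this rigorously requires controlling the \emph{relative} asymptotics of $\util_n$ versus $\util_m$ uniformly as $m \to \infty$, which is not provided by $C^\infty_{\rm loc}$ convergence alone and would require a substantial detour through the machinery of~\cite{sie_CPAM}. As written, the argument is circular: you invoke stability of the intersection count under a degeneration that changes the asymptotic winding, which is exactly the phenomenon you are trying to rule out.

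The paper's proof avoids this entirely by working with the section $t \mapsto \pi_\xi(\partial_s u(s,t))$ (the $\xi$-projection of the radial derivative) rather than with intersection numbers of curves. By Theorem~\ref{thm_asymptotics}, for $s_1$ large this section for the limit $\util$ is nonvanishing with winding $\wind_\sigma(\nu)$ at $s = s_1$; by $C^\infty_{\rm loc}$ convergence, the corresponding section for $\util_n$ has the same winding at $s = s_1$ when $n$ is large. Carleman's similarity principle, applied to the CR-type equation satisfied by $\pi_\xi(\partial_s u_n)$, shows that zeros on $[s_1,\infty)\times\R/\Z$ are isolated and count positively, hence the winding at $s=s_1$ is bounded above by the asymptotic winding $\wind_\sigma(\nu_n)$ of $\util_n$, which is at most $1$ since $\util_n$ is fast. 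This gives $\wind_\sigma(\nu) \leq 1$ directly, with no intersection theory and no uniform relative asymptotics needed. If you insist on an intersection-theoretic proof, the honest route is to reduce to this same monotonicity of winding for a single holomorphic section near the puncture, at which point you have reproduced the paper's argument.

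A minor point on the final sentence: you do not need to argue that $\util$ has nontrivial asymptotic behavior when $\Gamma = \emptyset$. By definition $\wind_\infty(\util) = -\infty \leq 1$ in the trivial case, so $\util$ is fast either way.
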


\begin{proof}
Write $\util_n(s,t)$ instead of $\util_n(e^{2\pi(s+it)})$, and similarly $\util(s,t)$ for $s$ large enough. Let $\mathcal{N}$ be a small tubular neighborhood of $x(\R)$ in $V^+$. We can assume, with no loss of generality, that $\sigma$ extends to a $d\lambda$-symplectic trivializing frame of $\xi|_{\mathcal{N}}$ which we still denote by $\sigma$. It follows from a suitable application of the Monotonicity Lemma,  Lemma~5.2 in~\cite{sft_comp} together with results on cylinders with small contact area from~\cite{small_area} that there exists $s_0\gg1$ such that 
$$ 
\util_n([s_0,+\infty)\times\R/\Z) \subset [0,+\infty)\times \mathcal{N}\ \forall n, \qquad \util([s_0,+\infty)\times\R/\Z) \subset [0,+\infty)\times \mathcal{N} \, .
$$
Write $\util_n=(a_n,u_n)$ and $\util=(a,u)$ in components, for $s\geq s_0$. Let $\pi_\xi:TV^+ \to \xi$ denote the bundle projection along $\R X_\lambda$. From Theorem~\ref{thm_asymptotics} it follows that if $s_1>s_0$ is fixed large enough then $\pi_\xi(\partial_su)$ does not vanish on $[s_1,+\infty)\times\R/\Z$ and the winding number $\wind_\sigma(\pi_\xi(\partial_su)(s_1,\cdot))$ of $t\mapsto \pi_\xi(\partial_su)(s_1,t)$ in the frame $\sigma$ is equal to $\wind_\sigma(\nu)$. Since $\pi_\xi(\partial_su_n) \to \pi_\xi(\partial_su)$ in $C^\infty_{\rm loc}$ we find $n_0$ such that if $n\geq n_0$ then $\pi_\xi(\partial_su_n)$ does not vanish on $\{s_1\}\times\R/\Z$ and 
$$ 
\wind_\sigma(\pi_\xi(\partial_su_n)(s_1,\cdot)) = \wind_\sigma(\pi_\xi(\partial_su)(s_1,\cdot)) = \wind_\sigma(\nu) \, .
$$ 
Using $\sigma$ one can represent $(s,t) \mapsto \pi_\xi(\partial_su_n)$ by smooth maps $\zeta_n : [s_0,+\infty) \to \C$ satisfying a Cauchy-Riemann type equation. Carleman's similarity principle implies that either $\zeta_n$ vanishes identically on $(s_0,+\infty)\times\R/\Z$, or its zeros are isolated and count positively. Since $s_1>s_0$ and $\zeta_n(s_1,\cdot)$ does not vanish when $n\geq n_0$ we conclude that for every $n\geq n_0$ the zeroes of $\zeta_n$ on $(s_0,+\infty)\times \R/\Z$ form a discrete set. If $n\geq n_0$ and $\util_n$ has a trivial asymptotic behaviour then $\pi_\xi(\partial_s\util_n)$ vanishes identically near the puncture, absurd. Hence $\util_n$ has nontrivial asymptotic behaviour when $n\geq n_0$. Moreover, by Theorem~\ref{thm_asymptotics} $\zeta_n(s,t)$ does not vanish when $s$ is large enough and for every $n$ we have $$ \lim_{s\to+\infty} \wind(\zeta_n(s,\cdot)) = \lim_{s\to+\infty} \wind_\sigma(\pi_\xi(\partial_su_n)(s,\cdot)) = \wind_\sigma(\nu_n) \, . $$
Standard degree theory implies that $\wind_\sigma(\nu_n) - \wind(\zeta_n(s_1,\cdot))\geq 0$ is the algebraic count of zeros of $\zeta_n$ on $[s_1,+\infty)\times\R/\Z$, i.e. $\wind_\sigma(\pi_\xi(\partial_su_n)(s_1,\cdot)) \leq \wind_\sigma(\nu_n)$ for all $n\geq n_0$. Hence 
$$ 
n\geq n_0 \Rightarrow \wind_\sigma(\nu) \leq \wind_\sigma(\nu_n) \leq 1 
$$ 
as desired.
\end{proof}

\begin{lemma}
\label{lemma_limits_in_moduli_space}
If $[\util_n] \in \Mfast(P,\jbar,e)$ is a sequence such that $\util_n$ $C^\infty_{\rm loc}$-converges to a plane $\util$, then $\util$ defines an element of $\Mfast(P,\jbar,e)$.
\end{lemma}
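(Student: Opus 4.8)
The plan is to verify the three defining conditions for $[\util]\in\Mfast(P,\jbar,e)$: that $\util$ is a fast finite-energy plane asymptotic to $P$, that it represents $e$ and satisfies $\CZ(\util)\geq3$, and that it is embedded. The first is immediate from Lemma~\ref{lemma_limits_of_fast_planes} applied with $\Gamma=\emptyset$: $\util$ is a fast plane asymptotic to $P$ at $\infty$. Moreover $\util$ has nontrivial asymptotic behaviour at $\infty$, since a finite-energy plane cannot be asymptotically a cover of the trivial cylinder over $P$; hence its asymptotic eigenvalue $\nu$ is defined and $\wind_\sigma(\nu)=\wind_\infty(\util)\leq1$.

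To obtain the class $e$ and $\CZ(\util)\geq3$ I would first upgrade $C^\infty_{\rm loc}$-convergence to uniform control of the cylindrical end, exactly as in the proof of Lemma~\ref{lemma_limits_of_fast_planes}: the Monotonicity Lemma (Lemma~5.2 in \cite{sft_comp}) and the small-contact-area estimates of \cite{small_area} produce a single $s_0\gg1$ with $\util_n([s_0,+\infty)\times\R/\Z)$ and $\util([s_0,+\infty)\times\R/\Z)$ all contained in a fixed tubular neighbourhood of $x(\R)$ in $[0,+\infty)\times V^+$. Together with $C^\infty_{\rm loc}$-convergence this shows that for $n$ large $\util_n$ and $\util$ are homotopic through finite-energy planes asymptotic to $P$; hence $\util$ represents $e$, the $d\lambda$-symplectic trivialization $\sigma$ aligned with every $\util_n$ is aligned with $\util$ up to homotopy, and therefore $\CZ(\util)=\CZ^0_\sigma(P)=\CZ(\util_n)\geq3$.

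The crux is embeddedness. First, $\util$ is somewhere injective: a finite-energy plane asymptotic to the \emph{simply covered} orbit $P$ cannot be a multiple cover, since a nonconstant holomorphic branched cover $\C\to\C$ (resp.\ $\C\to S^2$) through which $\util$ would factor forces the asymptotic limit to be a proper iterate of the underlying orbit (resp.\ the puncture at $\infty$ to be removable). Given somewhere injectivity, it then suffices to show that the weighted self-intersection number $\util*_\delta\util$ vanishes: this number is defined for any simple fast plane asymptotic to $P$ in the class $e$, it is nonnegative, and by the adjunction formula for such curves (see \cite{sief_int}) its vanishing forces $\util$ to be immersed, to have no double points, and to wind once at $\infty$, hence to be embedded. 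Now $\util_n*_\delta\util_n=0$ by \eqref{int_u_u}, and $\util*_\delta\util\leq\liminf_n\util_n*_\delta\util_n$ by lower semicontinuity of this intersection number along the sequence: the uniform end control of the previous step ensures that no intersection escapes to infinity, and positivity of intersections for $\jbar$-holomorphic curves ensures that the intersections surviving in the $C^\infty_{\rm loc}$-limit are counted with nonnegative weights. (Concretely, a double point $\util(z_1)=\util(z_2)$ with $z_1\neq z_2$ would, by somewhere injectivity and positivity of intersections, yield a strictly positive local intersection index between two distinct $\jbar$-holomorphic branches; being a homotopy invariant rel disjoint boundaries, it would survive for $\util_n$ with $n$ large and contradict embeddedness of $\util_n$, while the contributions ``at infinity'' are controlled by $\wind_\infty(\util)\leq1$.) Thus $\util*_\delta\util=0$ and $[\util]\in\Mfast(P,\jbar,e)$.

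I expect the main obstacle to be the uniform control of the cylindrical ends: it is needed both to identify the relative homotopy class of $\util$ and to make the self-intersection number lower semicontinuous along the sequence, so that $\util*_\delta\util=0$ survives the limit rather than being destroyed by intersections sliding off to infinity. This is exactly the analytic input supplied by the Monotonicity Lemma and small-contact-area estimates already invoked in the proof of Lemma~\ref{lemma_limits_of_fast_planes}; the remaining steps are bookkeeping with winding numbers and the intersection theory of \S~\ref{ssec_fast_planes}.
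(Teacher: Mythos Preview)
Your proof is correct, and the checks for fastness, the class~$e$, and $\CZ(\util)\geq 3$ are essentially the same as the paper's (the paper compresses them into ``follows from the convergence'' plus Lemma~\ref{lemma_limits_of_fast_planes}). The difference lies in how embeddedness is obtained.

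The paper bypasses the intersection machinery entirely and appeals directly to Micallef--White~\cite{MW}: once $\util$ is known to be somewhere injective (because $P$ is simply covered), a critical point or a transverse self-intersection of $\util$ carries a strictly positive local self-intersection index that is stable under $C^\infty_{\rm loc}$-perturbation, so for large $n$ the embedded plane $\util_n$ would inherit a singularity, absurd. This is a two-line argument requiring no asymptotic bookkeeping. Your route instead packages the same local positivity into the global constrained self-intersection number $\util*_\delta\util$ and the adjunction identity from~\cite{sief_int}: homotopy invariance gives $\util*_\delta\util=\util_n*_\delta\util_n=0$ (you phrase this as lower semicontinuity, which is weaker than what you actually have), and then the adjunction decomposition forces the singularity index and the asymptotic defect to vanish separately, yielding embeddedness. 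Your parenthetical sketch is in fact closer to the paper's argument, though it only treats double points explicitly; critical points are handled either by the same Micallef--White persistence principle or, as you indicate, by the adjunction route. The paper's approach is shorter and avoids extending the definition of $*_\delta$ to curves not yet known to lie in $\Mfast(P,\jbar,e)$; your approach stays within the intersection-theoretic framework already set up in the paper and makes the role of $\wind_\infty(\util)\leq 1$ in controlling contributions at infinity more transparent.
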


\begin{proof}
That $\util$ represents $e$ follows from the convergence. 
Note that $\util$ is somewhere injective since its asymptotic limit $P$ is simply covered. 
If $\util$ has a critical point or a self-intersection then we can invoke~\cite{MW} to conclude that $\util_n$ is not embedded when $n$ is large enough, absurd. 
The somewhere injectivity of $\util$ is crucial here. 
Hence $\util$ is embedded.
By Lemma~\ref{lemma_limits_of_fast_planes} $\util$ is fast. 
Thus $\util \in \Mfast(P,\jbar,e)$. 
\end{proof}

\subsection{SFT Compactness}

The SFT compactness theorem from~\cite{sft_comp} is the generalization of Gromov's compactness theorem to curves with punctures. The first step in describing its statement is a discussion of \textit{nodal curves} in $(\overline{W},\overline{\omega},\jbar)$ or in $(\R\times V,d(e^a\lambda),\jtil)$. A nodal curve in $(\overline{W},\overline{\omega},\jbar)$ is an equivalence class of tuples $(\util,S,j,\Gamma_+,\Gamma_-,D)$, where 
\begin{itemize}
\item $(S,j)$ is a (not necessarily connected) closed Riemann surface,
\item $\Gamma_+$ and $\Gamma_-$ are disjoint ordered finite subsets of $S$,
\item $\util:(S\setminus(\Gamma_+\cup\Gamma_-),j) \to (\overline{W},\jbar)$ is a finite-energy pseudo-holomorphic map with positive punctures on $\Gamma_+$ and negative punctures on $\Gamma_-$,
\item $D$ is a finite unordered set of pairwise disjoint unordered pairs of distinct points of $S\setminus(\Gamma_+\cup\Gamma_-)$, such that if $\{z,w\} \in D$ then $\util(z)=\util(w)$. 
\end{itemize}
The pairs in $D$ are called \textit{nodal pairs} and, at times, we might also denote by $D$ the subset of $S\setminus(\Gamma_+\cup\Gamma_-)$ consisting of all points forming the nodal pairs. Two such tuples are declared equivalent $$ (\util,S,j,\Gamma_+,\Gamma_-,D) \sim (\util',S',j',\Gamma_+',\Gamma_-',D') $$ if there exists a biholomorphism $\phi : (S,j) \to (S',j')$ such that 
\begin{itemize}
\item $\phi(\Gamma_\pm)=\Gamma_\pm'$ and $\phi$ defines order preserving maps $\Gamma_\pm \to \Gamma'_\pm$,
\item $\phi(D) = D'$ and $\phi$ maps pairs to pairs,
\item $\util = \util'\circ\phi$.
\end{itemize}
We may refer to $\phi$ simply as a \textit{reparametrization}. Nodal curves in $(\R\times V,d(e^a\lambda),\jtil)$ are defined in the same manner except that one needs to further quotient by the action of $(\R,+)$ on the first component. 
In both cases equivalence classes will be denote by $[\util,S,j,\Gamma_+,\Gamma_-,D]$. 
The nodal curve is said to be \textit{smooth} if $D=\emptyset$.

A nodal curve $[\util,S,j,\Gamma_+,\Gamma_-,D]$ in $(\overline{W},\jbar)$ is \textit{stable} if $2g_*+\mu_*\geq 3$ holds for every connected component $S_* \subset S$ such that $\util|_{S_*\setminus(\Gamma_+\cup\Gamma_-)}$ is the constant map; here $g_*$ is the genus of $S_*$ and $\mu_*$ is total number of punctures and nodal points in $S_*$. For a nodal curve in $(\R\times V,\jtil)$ stability is defined by further asking the existence of at least one connected component $S_0 \subset S$ such that $\util|_{S_0\setminus(\Gamma_+\cup\Gamma_-)}$ is not an unbranched cover of a trivial cylinder over a periodic orbit.

\begin{remark}
\label{rmk_blown_up_circles}
If $(S,j)$ is a Riemann surface and $z\in S$ then the circle $(T_zS\setminus0)/\R_+$ is naturally induced with a metric (which makes it isometric to the standard $\R/2\pi\Z$), and with an orientation. We will refer to this circle as the \textit{blown up circle at $z$}. The punctured surface $S\setminus\{z\}$ may be compactified to a surface with boundary obtained by adding a blown up circle at $z$.
\end{remark}


The next step is to consider the notion of a \textit{holomorphic building} $\mathbf{u}$ of height $k_-|1|k_+$, with $k_\pm\geq0$. The building $\mathbf{u}$ is the equivalence class of tuples $$ \{\{\util_m\}_{-k_-\leq m\leq k_+},\{\Phi_m\}_{-k_-\leq m\leq k_+-1}\} $$ where:
\begin{itemize}
\item[(i)] $\util_0 = (\util_0,S_0,j_0,\Gamma^0_+,\Gamma^0_-,D_0)$ represents a nodal curve in $(\overline{W},\jbar)$.
\item[(ii)] $\forall m\geq 1$, $\util_m = (\util_m,S_m,j_m,\Gamma^m_+,\Gamma^m_-,D_m)$ represents a nodal curve in $(\R\times V^+,\jtil)$.
\item[(iii)] $\forall m\leq -1$, $\util_m = (\util_m,S_m,j_m,\Gamma^m_+,\Gamma^m_-,D_m)$ represents a nodal curve in $(\R\times V^-,\jtil)$.
\item[(iv)] The $\Phi_m$ are orientation reversing isometries $$ \Phi_m : \bigcup_{z\in\Gamma^m_+} (T_zS_m\setminus0)/\R_+ \to \bigcup_{z\in\Gamma^{m+1}_-} (T_zS_{m+1}\setminus0)/\R_+ $$ that cover order preserving bijections $\Gamma^m_+ \to \Gamma^{m+1}_-$, and such that the following holds. When $m\neq 0$ write $\util_m=(a_m,u_m)$. Add to $S_m \setminus (\Gamma^m_+\cup \Gamma^m_-)$ the blown-up circles $\{C_z=(T_zS^m\setminus0)/\R_+\}_{z\in \Gamma^m_+}$, at the positive punctures to obtain a surface with boundary $\widehat{S^m}$. The asymptotic behavior allows for the  projected map $u_m$ to be uniquely extended to a map $\overline{u}_m$ on $\widehat{S^m}$. Analogously, when $m<k_+$, add to  $S_{m+1} \setminus (\Gamma^{m+1}_+\cup \Gamma^{m+1}_-)$ the blown-up circles at the negative punctures $\Gamma^{m+1}_-$ to obtain a surface $\widehat{S^{m+1}}$ and an extended map $\overline{u}_{m+1}$. Then it is required that $\overline{u}_{m+1} \circ \Phi_m = \overline{u}_m$ on $C_z$ for every $z\in\Gamma_m^+$ and every $-k_-\leq m\leq k_+-1$. 
\end{itemize}

Two such collections $\{\{\util_m\},\{\Phi_m\}\}$, $\{\{\util_m'\},\{\Phi_m'\}\}$ are declared equivalent if they have the same height, $\util_m'$ represents the same nodal curve as $\util_m$ for each $m$, and the corresponding reparameterizations intertwine the orientation reversing isometries $\Phi_m$ and $\Phi_m'$. Moreover, synchronized reordering of the intermediate punctures also define equivalent buildings. The data $\{\Phi_m\}$ induces a \textit{decoration} at the punctures between levels. 

Let $\mathbf{u}$ be a holomorphic building of height $k_-|1|k_+$ represented by $\{\{\util_m\},\{\Phi_m\}\}$ as described above. Fix an arbitrary choice $r$ of orientation reversing isometries between blown up circles of points in nodal pairs in $\cup_mD_m$. The data $r$ is called a decoration at nodal pairs. Consider the surface $$ S^{\mathbf{u},r}  = \left( \sqcup_m \overline{S}_m \right) / \sim $$ where blown up circles at points of nodal pairs are identified by $r$, and $\Phi_m$ is used to identify blown up circles at $\Gamma^m_+$ with blown up circles at $\Gamma^{m+1}_-$. The interior circles of $S^{\mathbf{u},r}$ corresponding to blown up circles at nodal pairs or at punctures between levels will be called \textit{special circles}. Note that $S_m \setminus (\Gamma^m_+ \cup \Gamma^m_- \cup D_m)$ can be seen as an open subset of $S^{\mathbf{u},r}$. By the asymptotic behavior and conditions (i)-(iv) we can define a continuous map
\begin{equation}
F_{\mathbf{u}}:S^{\mathbf{u},r} \to \overline{W}_\infty
\end{equation}
that agrees with $\util_0$ on $S_0\setminus(\Gamma^0_+ \cup \Gamma^0_- \cup D_0)$,  with $(+\infty,u_m)$ on $S_m \setminus (\Gamma^m_+ \cup \Gamma^m_- \cup D_m)$ when $m\geq1$, and with $(-\infty,u_m)$ on $S_m \setminus (\Gamma^m_+ \cup \Gamma^m_- \cup D_m)$ when $m\leq-1$. Here we wrote $\util_m=(a_m,u_m)$ for $m\neq0$.

The final step in this discussion is the description of SFT convergence of a sequence $C_n=[\vtil_n,\Sigma_n,i_n,Z^n_+,Z^n_-,\emptyset]$ of connected smooth curves in $(\overline{W},\overline{J})$ with energy and genus bounds. Such a sequence is said to \textit{SFT converge} to a building $\mathbf{u}$ as above if there exists a sequence of diffeomorphisms $$ \varphi_n : S^{\mathbf{u},r} \to \overline{\Sigma}_n $$ and finite ordered sets $K_n \subset \Sigma_n$, $K\subset \cup_mS_m$ such that the following holds:
\begin{itemize}
\item[(a)] $K_n$ is disjoint from the punctures in $\Sigma_n$.
\item[(b)] $K$ is disjoint from the all punctures and nodal points in $\cup_mS_m$. 
\item[(c)] If $g_n$ is the genus of $\Sigma_n$ and $\nu_n = \#(Z^n_+\cup Z^n_-\cup K_n)$ then $2g_n+\nu_n\geq3$. If $S_*$ is a connect component of $\cup_mS_m$ with genus $g_*$, and $\nu_*$ is the total number of punctures, nodal points and points of $K$ in $S_*$, then $2g_*+\nu_* \geq 3$.
\item[(d)] $\varphi_n$ maps blown up circles at $\Gamma^{k_+}_+$ onto blown up circles at $Z^n_+$ covering an order preserving bijection $\Gamma^{k_+}_+ \to Z^n_+$, blown up circles at $\Gamma^{-k_-}_-$ onto blown up circles at $Z^n_-$ covering an order preserving bijection $\Gamma^{-k_-}_- \to Z^n_-$, and maps $K \to K_n$ in an order-preserving manner.
\item[(e)] Let $h_n$ be the hyperbolic metric on $\Sigma_n \setminus (Z^n_+\cup Z^n_- \cup K_n)$ induced by $i_n$, and $h$ be the hyperbolic metric on $\cup_m \ S_m \setminus (\Gamma^m_+\cup\Gamma^m_-\cup D_m \cup K)$ induced by $\{j_m\}$. Then $\varphi_n^*h_n \to h$ in the $C^\infty_{\rm loc}$-topology on $\cup_m \ S_m \setminus (\Gamma^m_+\cup\Gamma^m_-\cup D_m \cup K)$, where the latter is seen as an open subset of $S^{\mathbf{u},r}$. Moreover, $\varphi_n$ maps special circles to closed geodesics of $h_n$.
\item[(f)] $F_{C_n} \to F_{\mathbf{u}}$ in $C^0$, where here $C_n$ is seen as a building of height $0|1|0$.
\item[(g)] The following holds:
\begin{itemize}
\item[($+$)] If $m\geq1$ then there exists $c_{m,n} \to +\infty$ such that the following holds. For every compact set $X\subset \Gamma^m_+\cup\Gamma^m_-\cup D_m$ and every $\varepsilon>0$ there exists $n_{X,\epsilon}$ such that if $n\geq n_{X,\varepsilon}$ then $\vtil_n \circ \varphi_n(X) \subset [0,+\infty) \times V^+$ and $$ \sup_{z\in X} |\pi_\R \circ \vtil_n \circ \varphi_n(z) - c_{m,n} - a_m(z)| \leq \varepsilon $$ where $\pi_\R$ denotes projection onto the $\R$-component. 
\item[($-$)] If $m\leq-1$ then there exists $c_{m,n} \to -\infty$ such that the following holds. For every compact set $X\subset \Gamma^m_+\cup\Gamma^m_-\cup D_m$ and every $\varepsilon>0$ there exists $n_{X,\epsilon}$ such that if $n\geq n_{X,\varepsilon}$ then $\vtil_n \circ \varphi_n(X) \subset (-\infty,0] \times V^-$ and $$ \sup_{z\in X} |\pi_\R \circ \vtil_n \circ \varphi_n(z) - c_{m,n} - a_m(z)| \leq \varepsilon. $$
\end{itemize}
\end{itemize}


We also need to consider buildings and SFT convergence for curves in $(\R\times V^+,d(e^a\lambda),\jtil)$. The notion of nodal curves in this setting was already explained above. As for the buildings: one looks at collections $$ \{\{\util_m\}_{1\leq m\leq k_+},\{\Phi_m\}_{1\leq m\leq k_+-1}\} $$ as above where the $\util_m$ represent nodal curves in $(\R\times V^+,d(e^a\lambda),\jtil)$. 
In this case it is simpler to consider $F_{\mathbf{u}}$ defined from the $V^+$-components of the $\{\util_m\}$, so that $F_{\mathbf{u}}$ takes values on $V^+$. Conditions (a)-(f) remain unchanged, (g) is replaced by:
\begin{itemize}
\item[($\text{g}'$)] For each $m$ then there exists $c_{m,n} \in\R$ such that the following holds. For every compact set $X\subset \Gamma^m_+\cup\Gamma^m_-\cup D_m$ and every $\varepsilon>0$ there exists $n_{X,\epsilon}$ such that if $n\geq n_{X,\varepsilon}$ then
$$ \sup_{z\in X} |\pi_\R \circ \vtil_n \circ \varphi_n(z) - c_{m,n} - a_m(z)| \leq \varepsilon $$ where $\pi_\R$ denotes projection onto the $\R$-component. 
\end{itemize}

Since $\lambda$ is nondegenerate, it follows from the SFT compactness theorem~\cite{sft_comp} that any sequence of smooth curves intersecting a given compact subset of $\overline{W}$, with energy and genus bounds, contains a subsequence that SFT converges to a building as described above. 

\subsection{SFT-limits of fast planes}

Let $P=(x,T)$ be a simply covered periodic Reeb orbit  in~$V^+$. Consider a sequence $\vtil_n : (\C,i) \to (\overline{W},\jbar)$ of embedded fast planes in $(\overline{W},\overline{J})$ asymptotic to~$P$, satisfying~$\CZ(\vtil_n)\geq 3$, defining curves in the same connected component of $\Mfast(P,\jbar)$.  The goal here is to describe properties of the limiting holomorphic building of an SFT-convergent subsequence of these planes, assuming that they go through a fixed compact set $E\subset\overline{W}$.

\begin{remark}\label{rmk_hom_classes}
If $P'=(x',T')$ is a closed Reeb orbit in $V^+$ then from now on we identify $\pi_2(W,x'(\R)) \simeq \pi_2(\W_\infty,\{+\infty\}\times x'(\R))$ via an isomorphism $e\mapsto \hat e$ defined as follows. If $e$ is represented by $U:\D\to W$ such that $U(e^{i2\pi t}) \subset x'(\R)$ then $\hat e$ is represented by a map $\hat U$ from $(\D\sqcup [0,+\infty]\times\R/\Z)/[e^{i2\pi t}\sim (0,t)]$ to $\W_\infty$ defined by $\hat U(z)=U(z)$ if $z\in\D$, and $\hat U(s,t) = (s,U(e^{i2\pi t}))$ if $(s,t) \in [0,+\infty]\times\R/\Z$. It is simple to show that $e\mapsto \hat e$ is an isomorphism, and to write a formula for its inverse.
\end{remark}

Let $e \in \pi_2(W,x(\R))$ map to the $\lambda$-positive generator of $\pi_1(x(\R))$ by the boundary map. Let $w_n\in \C$ and assume that $\ev[\vtil_n,w_n] \in E$ for all $n$. Note that $[\vtil_n,w_n] \in \Mfast_1(P,\jbar)$. Up to selecting a subsequence, it can be assumed that these planes SFT-converge to a holomorphic building $\mathbf{u}$ of height $k_-|1|k_+$
\begin{equation}
\label{limiting_building}
C_n = [\vtil_n,\C\cup\{\infty\},i,\{\infty\},\emptyset,\emptyset] \overset{n\to\infty}{\longrightarrow} {\bf u}
\end{equation}
and that $F_{C_n}$ represents the class $e$ for every $n$. 

For any fixed $b>0$ the symplectic area
\begin{equation}
\int_\C \vtil_n^*\overline{\omega}_b
\end{equation}
is independent of $n$, where $\overline{\omega}_b$ is the symplectic form defined as in~\eqref{symp_area_b}. 

\begin{lemma}\label{lemma_comp_forcing_levels_above}
Suppose $\ev[\vtil_n,z_n] \to (+\infty,p) \in \overline{W}_\infty$ for some sequence $z_n \in \C$. If $p\not\in x(\R)$ then $k_+\geq1$. 
\end{lemma}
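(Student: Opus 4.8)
The plan is to track the points $z_n$ into the limiting building $\mathbf{u}$ using the reparametrizations provided by SFT convergence, and then to observe that a limit of $\vtil_n(z_n)$ lying in $\{+\infty\}\times V^+$ but off the asymptotic orbit $x(\R)$ cannot be produced by the bottom level $\util_0$, by the levels over $V^-$, or by the unique outgoing boundary circle of $\mathbf{u}$; it can only come from a genuine symplectization level over $V^+$, which forces $k_+\geq 1$.

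In detail, I would fix diffeomorphisms $\varphi_n:S^{\mathbf{u},r}\to\overline{\Sigma}_n$ realizing the convergence~\eqref{limiting_building}. Since $S^{\mathbf{u},r}$ is compact, after passing to a subsequence $\zeta_n:=\varphi_n^{-1}(z_n)$ converges to some $\zeta\in S^{\mathbf{u},r}$. As $F_{C_n}$ agrees with $\vtil_n$ on $\C$, combining the $C^0$-convergence $F_{C_n}\circ\varphi_n\to F_{\mathbf{u}}$ (condition (f) in the description of SFT convergence above) with continuity of $F_{\mathbf{u}}$ yields
\[
F_{\mathbf{u}}(\zeta)=\lim_{n\to\infty}F_{C_n}\big(\varphi_n(\zeta_n)\big)=\lim_{n\to\infty}\vtil_n(z_n)=\lim_{n\to\infty}\ev[\vtil_n,z_n]=(+\infty,p).
\]

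The main work is then a location analysis for $\zeta$ in $S^{\mathbf{u},r}=\big(\sqcup_m \overline{S}_m\big)/\sim$, using the explicit description of $F_{\mathbf{u}}$ recalled above. On the interior of level $0$ one has $F_{\mathbf{u}}=\util_0$, with values in $\overline{W}\subset\overline{W}_\infty$, which is disjoint from $\{+\infty\}\times V^+$; on the interior of any level $m\leq-1$, and on every blown-up circle attached only to such levels or to a negative puncture of level $0$, $F_{\mathbf{u}}$ takes values in $\{-\infty\}\times V^-$; and on a blown-up circle at a nodal pair of level $0$ it takes values in $\overline{W}$. None of these can equal $(+\infty,p)$. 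The remaining pieces of $S^{\mathbf{u},r}$, namely the interior of a level $m\geq1$, a blown-up circle at a nodal pair of such a level, and a blown-up circle identifying a positive puncture of a level $m\geq0$ with a negative puncture of level $m+1$, all have $F_{\mathbf{u}}$-image in $\{+\infty\}\times V^+$, but each exists only when $k_+\geq1$; the sole exception is the outgoing boundary circle of $S^{\mathbf{u},r}$, i.e. the blown-up circle at the unique positive puncture of the top level $\util_{k_+}$, on which $F_{\mathbf{u}}$ parametrizes $\{+\infty\}\times x(\R)$ because $\vtil_n$ is asymptotic to the simply covered orbit $P=(x,T)$. Since $p\notin x(\R)$, the limit point $\zeta$ cannot lie on this boundary circle; hence it lies in one of the pieces that occur only for $k_+\geq1$, and the lemma follows.

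The step I expect to require the most care is the bookkeeping of $F_{\mathbf{u}}$ on the blown-up circles of $S^{\mathbf{u},r}$ --- in particular, recognizing that when $k_+=0$ the set $F_{\mathbf{u}}^{-1}\big(\{+\infty\}\times V^+\big)$ reduces to the single outgoing boundary circle, which $F_{\mathbf{u}}$ pins to the orbit $x(\R)$; this is precisely where the hypothesis $p\notin x(\R)$ enters. The remaining ingredients --- compactness of $S^{\mathbf{u},r}$, extraction of $\zeta$, and passing to the limit inside $F_{\mathbf{u}}$ --- are routine.
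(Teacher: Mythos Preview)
Your proof is correct and follows essentially the same approach as the paper's: both use condition~(f) of SFT convergence to place $(+\infty,p)$ in the image of $F_{\mathbf{u}}$, and then observe that when $k_+=0$ the intersection $F_{\mathbf{u}}(S^{\mathbf{u},r})\cap(\{+\infty\}\times V^+)$ reduces to $\{+\infty\}\times x(\R)$, contradicting $p\notin x(\R)$. The paper compresses this into a three-line contradiction argument, whereas you spell out the extraction of the limit point $\zeta$ and the case analysis on the pieces of $S^{\mathbf{u},r}$ explicitly, but the content is the same.
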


\begin{lemma}\label{lemma_comp_conseq_levels_above}
If $k_+\geq1$ then there exists a periodic Reeb orbit $P'=(x',T')$ in $V^+$ such that $T'<T$, $P'$ spans a capping disk $D'$ in $W$ that does not intersect some disk in class $e$, and the Conley-Zehnder index of $P'$ relative to $D'$ is $\leq2$.
\end{lemma}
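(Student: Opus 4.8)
The plan is to read off from the limiting building $\mathbf{u}$ a short periodic orbit near its positive end and to bound its Conley--Zehnder index by a Fredholm-index computation. Since the $\vtil_n$ are planes, the glued surface $S^{\mathbf{u},r}$ is a nodal sphere with a single puncture (at $\infty$, asymptotic to $P$), hence a tree of spheres; in particular every component of $\mathbf{u}$ has genus $0$. First I would locate, descending from the puncture at $\infty$ and peeling off trivial cylinders over $P$, the first non-trivial curve $C_0$ below that puncture: it lies in a symplectization level $\R\times V^+$, has genus $0$, a positive puncture asymptotic to $P$ at which the asymptotic eigenvalue $\nu_P$ satisfies $\wind_\sigma(\nu_P)\le 1$ (one propagates $\winfty(\vtil_n)\le1$ through trivial cylinders exactly as in the proof of Lemma~\ref{lemma_limits_of_fast_planes}; here $\sigma$ is the trivialization aligned with the $\vtil_n$), and negative punctures asymptotic to orbits $P_1',\dots,P_\ell'$ which, by connectedness of the part of $\mathbf{u}$ in $\R\times V^+$, lie in $V_0^+$. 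The weighted intersection theory (Lemma~\ref{lemma_int_invariance}) applied to $\vtil_n*_\delta\vtil_n=0$ shows that all components of $\mathbf{u}$ are embedded and, apart from the orbit cylinders at their shared asymptotic limits, pairwise disjoint; in particular $C_0$ is somewhere injective.

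The period bound comes from Stokes' theorem applied to the part of $\mathbf{u}$ lying in the levels $\geq 1$: the total $\lambda$-action entering from the top is $T$, so $\sum_i T_i'\le T-\int_{C_0}d\lambda<T$, using that $C_0$, not being an unbranched cover of a trivial cylinder, has strictly positive $d\lambda$-area. Hence $T_i'<T$ for every $i$. For each $i$ let $D_i'\subset W$ be the capping disk for $P_i'$ obtained by pushing $F_{\mathbf{u}}$, restricted to the sub-building hanging below $P_i'$, into $W$ (the tree below $P_i'$ resolves to a genuine disk, as all of its components have genus $0$), and let $\sigma_{D_i'}$ be the induced trivialization of $P_i'$. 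The push-in of $F_{\mathbf{u}}|_{S^{\mathbf{u},r}\setminus\{\infty\}}$ is a disk in class $e$ (it is a $C^0$-limit of the class-$e$ disks $\vtil_n$, and the push-in preserves the relative homotopy class); it contains each $D_i'$ as a subdisk, and a standard homotopy of capping disks of $P$ — drag the $P_i'$-end of the complementary piece onto a push-off of $P_i'$ into $\operatorname{int}W$, carrying the $D_i'$-part off $D_i'$, and invoke the mutual disjointness above — produces a class-$e$ disk disjoint from $D_i'$.

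What remains, and is the heart of the matter, is to choose $i$ so that $\CZ^0_{\sigma_{D_i'}}(P_i')\le2$, and then set $P':=P_i'$, $D':=D_i'$, $T':=T_i'$. The starting observation is that for any negative puncture with trivialization $t$ and asymptotic eigenvalue $\nu$ (necessarily $\nu>0$) one has $\CZ^0_t(P_i')=\alpha^{<0}_t(P_i')+\alpha^{\geq0}_t(P_i')\le 2\alpha^{\geq0}_t(P_i')\le 2\wind_t(\nu)$, so it is enough to control $\sum_i\CZ^0_{\sigma_{D_i'}}(P_i')$. Restricting to $C_0$ the trivialization of $T\overline{W}$ along the class-$e$ disk above yields a trivialization $\Theta$ of $C_0^*\xi$ with $\Theta|_P=\sigma$, $\Theta|_{P_i'}=\sigma_{D_i'}$, so $c_1(C_0^*\xi;\Theta)=0$. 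Placing the weight $\delta<0$ in the spectral gap at $P$, where $\alpha^{<\delta}_\sigma(P)=1$ and $\CZ_\sigma^\delta(P)=3$, and applying the index formula to $C_0$ (genus $0$, $c_1=0$), one gets $\ind_\delta(C_0)=-\chi(C_0)+\CZ_\sigma^\delta(P)-\sum_i\CZ^0_{\sigma_{D_i'}}(P_i')=(\ell-1)+3-\sum_i\CZ^0_{\sigma_{D_i'}}(P_i')$. The key input is then $\ind_\delta(C_0)\ge1$, which I would obtain from SFT compactness combined with the automatic transversality and two-dimensionality of $\Mfast(P,\jbar)$: the stratum of the compactification containing $\mathbf{u}$ has nonnegative dimension, and the dimension count — using the embeddedness and mutual disjointness of the components of $\mathbf{u}$ coming from $\vtil_n*_\delta\vtil_n=0$, and that each non-trivial symplectization level carries the $\R$-translation field in the kernel of its linearization — forces this bound. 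Granting $\ind_\delta(C_0)\ge1$ we get $\sum_i\CZ^0_{\sigma_{D_i'}}(P_i')\le\ell+1$ with $\ell\ge1$ summands, so some $\CZ^0_{\sigma_{D_{i_0}'}}(P_{i_0}')\le(\ell+1)/\ell\le2$, as wanted.

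The tree combinatorics, the Stokes estimate, and the (fiddly but routine) homotopy arranging genuine disjointness of $D'$ from a class-$e$ disk are the parts I expect to cause no conceptual trouble. The main obstacle is the lower bound $\ind_\delta(C_0)\ge1$ for the \emph{fixed}, non-generic almost complex structure $\jbar$, where transversality for $C_0$ itself is unavailable: I expect the argument must either run the winding/zero-counting argument used above for the submersivity of $\ev$ (the linearized operator at $C_0$ has a one-dimensional kernel coming from translation, and a Carleman-similarity-principle count of zeros of kernel elements bounds the index from below) or carry out the full SFT dimension bookkeeping for the relevant stratum of $\overline{\Mfast}(P,\jbar)$; in addition one must treat the case in which the first non-trivial curve below $P$ is reached through a node rather than a puncture, re-running the same index count on the appropriate component.
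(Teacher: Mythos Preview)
Your overall architecture is reasonable, but the step you yourself flag as ``the main obstacle'' is a genuine gap, and your proposed workarounds do not close it. The bound $\ind_\delta(C_0)\ge 1$ need not hold for a non-generic $\jbar$, and it is \emph{not} implied by the information you actually have about $C_0$. Concretely: a genus-zero curve $C_0$ with a single positive puncture at $P$ (with $\wind_\sigma(\nu_P)=1$, so $\CZ^\delta_\sigma(P)=3$) and two negative punctures at orbits $P_1',P_2'$ with $\CZ^0(P_i')=2$ and $\wind_\sigma(\nu_i)=1$ has $\wind_\pi(C_0)=0$ (so all the geometric constraints are met) but $\ind_\delta(C_0)=(\ell-1)+3-\sum_i\CZ^0(P_i')=1+3-4=0$. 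Your ``translation gives a kernel element, hence index $\ge 1$'' heuristic fails because a nontrivial kernel says nothing about the cokernel; and the SFT-stratum bookkeeping you allude to presupposes exactly the transversality that is unavailable here. Note that in this scenario the lemma is still true (both $P_i'$ have $\CZ^0\le 2$), so the index route overshoots what is needed and then fails to deliver.

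The paper bypasses this entirely by using the inequality $\wind_\pi\ge 0$, which holds for \emph{every} finite-energy curve in a symplectization that is not a branched cover of a trivial cylinder, with no genericity hypothesis. On the top level $\util_{k_+}$ one has $\wind_\infty(\util_{k_+},z_+,\sigma)\le 1$ (your propagation argument), and if every negative asymptotic limit $P_j$ satisfied $\CZ^0_\sigma(P_j)\ge 3$ then $\wind_\infty(\util_{k_+},\zeta_j,\sigma)\ge\alpha^{\ge 0}_\sigma(P_j)\ge 2$, whence
\[
0\le \wind_\pi(\util_{k_+})=\wind_\infty(\util_{k_+})-2+(N+1)\le 1-2N+N-1=-N,
\]
forcing $N=0$, a contradiction. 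This gives a negative puncture with $\CZ^0_\sigma\le 2$ directly. The paper then \emph{inducts down} through the positive levels (handling branched covers of trivial cylinders separately via monotonicity of $\CZ$ in the covering multiplicity) to locate such an orbit $P'$ at a negative puncture of level~$1$. This placement is what makes the capping-disk argument clean: the disk $D'=F_{\mathbf u}|_D$ bounded by the corresponding special circle lies entirely in levels $\le 0$, and one checks it is disjoint from the image of $\vtil_n$ for large $n$ by positivity of intersections together with Lemma~\ref{lemma_intersections_fast_planes}, rather than by the push-off homotopy you sketch. Replacing your index step with the $\wind_\pi$ estimate, and carrying the induction down to level~$1$, gives a complete argument.
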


\begin{lemma}\label{lemma_comp_spheres_zero_level}
Let $[\util_0,S_0,j_0,\Gamma^+_0,\Gamma^-_0,D_0]$ be the nodal curve at the level zero of $\mathbf{u}$. If $D_0 \neq \emptyset$, $k_+=k_-=0$ then there exists non-constant $\jbar$-sphere.
\end{lemma}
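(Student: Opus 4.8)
The plan is to produce the sphere from a leaf component of the level-zero nodal curve, using the genus bound coming from SFT compactness together with the stability condition. Under the hypothesis $k_+=k_-=0$ the building $\mathbf{u}$ consists solely of the level-zero nodal curve $[\util_0,S_0,j_0,\Gamma^0_+,\Gamma^0_-,D_0]$ in $(\W,\jbar)$, and since every $\vtil_n$ is a plane, we have $\Gamma^0_+=\{\infty\}$ (a single positive puncture, the limit of the punctures of the $\vtil_n$) and $\Gamma^0_-=\emptyset$. First I would record the topological constraint on $S_0$. Write $S_0=S_1\sqcup\cdots\sqcup S_V$ for the decomposition into connected components, let $g_i$ be the genus of $S_i$, and let $N=\#D_0$. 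By SFT convergence the decorated surface $S^{\mathbf{u},r}$ is diffeomorphic to $\overline{\Sigma}_n=\overline{\C\cup\{\infty\}}$, hence connected; its dual graph (vertices $=$ components of $S_0$, edges $=$ nodal pairs) is therefore connected, so $N\ge V-1$. On the other hand, since each $\vtil_n$ has genus $0$, the arithmetic genus of the limiting building is $0$, which with $k_+=k_-=0$ gives $\sum_i g_i+N-V+1=0$.

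Combining $\sum_i g_i+N-V+1=0$ with $N\ge V-1$ forces $N=V-1$ and $g_i=0$ for every $i$: the dual graph is a tree and each component of $S_0$ is a sphere. Now assume $D_0\neq\emptyset$; then $N\ge 1$, so $V=N+1\ge 2$. A tree on at least two vertices has at least two leaves, and since the positive puncture $\infty$ lies on exactly one component, at least one leaf component $S_*$ does not carry $\infty$. Being a leaf, $S_*$ meets exactly one nodal point, and it carries no puncture (as $\Gamma^0_+=\{\infty\}$ and $\Gamma^0_-=\emptyset$); since nodal points are not punctures, $\util_0|_{S_*}$ extends to a smooth $\jbar$-holomorphic map $S_*\cong S^2\to\W$, i.e.\ a $\jbar$-sphere.

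It remains to see this sphere is non-constant. If $\util_0|_{S_*}$ were constant, then $S_*$ would be a genus-$0$ component on which $\util_0$ is constant carrying a single special point (its one nodal point), so $2g_*+\mu_*=2\cdot 0+1=1<3$, violating the stability of $\mathbf{u}$. Hence $\util_0|_{S_*}$ is non-constant, proving the lemma. The argument is essentially bookkeeping, and I do not expect a substantive obstacle: the only points that need care are deriving the genus identity $\sum_i g_i+N-V+1=0$ correctly from the genus bound of the SFT compactness theorem for height-$0|1|0$ buildings, and checking that a non-root leaf of the dual graph carries exactly one special point and no punctures, so that the restricted map is an honest $\jbar$-sphere to which the stability inequality applies.
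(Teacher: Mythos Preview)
Your proof is correct and follows the same underlying idea as the paper: with $k_+=k_-=0$ the level-zero nodal curve is a single plane asymptotic to $P$ together with a tree of genus-zero bubbles, and if $D_0\neq\emptyset$ one of these bubbles is the desired non-constant $\jbar$-sphere. The paper's proof simply asserts this decomposition in one sentence, whereas you supply the bookkeeping (the arithmetic-genus identity $\sum_i g_i+N-V+1=0$, the tree structure of the dual graph, and the stability check $2g_*+\mu_*\ge 3$ on a non-root leaf) that justifies it; your extra care in locating a leaf component carrying exactly one nodal point and no puncture is precisely what makes the non-constancy conclusion rigorous.
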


\begin{lemma}\label{lemma_comp_periods_lower_levels}
If $k_-\geq1$ then there exists a periodic Reeb orbit $P_*=(x_*,T_*)$ in $(V^-,\lambda)$ 
satisfying
\begin{equation}
\label{action_ineq_lemma}
T_* \leq e^b \ \left( \int_\C \vtil_n^*\overline{\omega}_b \ - \ \int_{S_0\setminus(\Gamma^+_0\cup\Gamma^-_0)} \util_0^*\overline{\omega}_b \right)
\end{equation}
for every $b>0$.
\end{lemma}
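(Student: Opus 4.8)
\textbf{Proof plan for Lemma~\ref{lemma_comp_periods_lower_levels}.}
The plan is to locate the desired orbit in $(V^-,\lambda)$ by inspecting the bottom level $m=-k_-$ of the building $\mathbf{u}$ and extracting its positive asymptotic limits, and then to control the period of the orbit so obtained by a Stokes-type computation using the area form $\overline{\omega}_b$. First I would observe that, since $k_-\geq1$, there is a nodal curve $\util_{-k_-}=(a_{-k_-},u_{-k_-})$ at the lowest level, living in $(\R\times V^-,\jtil)$. By construction of a building the bottom level has no negative punctures, so $\Gamma^{-k_-}_-=\emptyset$; on the other hand it must have at least one positive puncture (otherwise, together with stability, the level is a union of closed $\jtil$-curves, and a finite-energy $\jtil$-holomorphic map from a closed Riemann surface into a symplectization is constant, contradicting either the energy bound or stability). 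Pick any positive puncture $z$ of any connected component $S_*$ of $S_{-k_-}$ on which $\util_{-k_-}$ is not a cover of a trivial cylinder; the asymptotic limit there is a periodic Reeb orbit $P_*=(x_*,T_*)$ in $V^-$. (If every component carrying a positive puncture were a trivial cylinder cover, then, tracing the matching conditions (iv) upward, the orbit at that puncture would be the asymptotic limit at a negative puncture of the level above, and one can still take that orbit as $P_*$; in any case $P_*\subset V^-$ because it is the asymptotic limit at $-\infty$ of a curve sitting over $V^-$.)

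The second step is the period estimate. The key point is that for a $\jtil$-holomorphic curve $\vtil=(b,v)$ in $(\R\times V^-,\jtil)$, the positive $d\lambda$-area through any slice $\{b=c\}$, hence the sum of the periods of its positive asymptotic limits, is bounded by the $\overline{\omega}_b$-area of $\vtil$ times $e^{b}$; this is because on $(-\infty,0]\times V^-$ the form $\overline{\omega}_b$ equals $d(\phi_b^-\lambda)$ with $\phi_b^-\to e^{-b}$ at $-\infty$, and Stokes' theorem applied between a large slice and the asymptotic cylinders gives $\sum_{\text{pos.\ punctures}} T = e^{b}\int \vtil^*\overline{\omega}_b$ in the limit. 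To make this quantitative for the whole building I would compare the fixed area $\int_\C\vtil_n^*\overline{\omega}_b$ with the sum of the $\overline{\omega}_b$-areas of the pieces of $\mathbf{u}$: by the convergence statement (f) and the fact that $\overline{\omega}_b$ is an honest symplectic form (so that areas are homological and add up under the degeneration), the fixed number $\int_\C\vtil_n^*\overline{\omega}_b$ equals $\int_{S_0\setminus(\Gamma^+_0\cup\Gamma^-_0)}\util_0^*\overline{\omega}_b$ plus the nonnegative contributions of all higher and lower levels. Throwing away every contribution except that of the component $S_*$ at the bottom, and using that its positive asymptotic limits include $P_*$ (and that all periods are positive), yields
\begin{equation*}
e^{-b}\,T_* \ \leq \ \int_{S_*\setminus(\Gamma^+_*\cup\Gamma^-_*)}\util_{-k_-}^*\overline{\omega}_b \ \leq \ \int_\C\vtil_n^*\overline{\omega}_b \ - \ \int_{S_0\setminus(\Gamma^+_0\cup\Gamma^-_0)}\util_0^*\overline{\omega}_b,
\end{equation*}
which is exactly~\eqref{action_ineq_lemma} after multiplying by $e^{b}$. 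One must check that this holds for every $b>0$; since the building $\mathbf{u}$ itself may a priori depend on $b$ through the choice of SFT-convergent subsequence, I would fix $b$ first, run the compactness argument, and note that the left-hand side period $T_*$ of \emph{some} orbit in $V^-$ is then bounded as claimed — and since the statement only asserts existence of one such orbit for each $b$, this suffices. (If one wants a single orbit working for all $b$, one invokes that the set of periodic Reeb orbits in $V^-$ with bounded period is finite by nondegeneracy, together with a diagonal argument.)

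The main obstacle I expect is bookkeeping the area additivity across the building in the presence of nodal pairs and trivial-cylinder components: one needs to be sure that no $\overline{\omega}_b$-area is ``lost'' in the nodes or in the necks (it is not, because $\overline{\omega}_b$ is exact on the ends and closed everywhere, and SFT convergence is area-preserving in the limit by (f) together with the no-loss-of-energy part of~\cite{sft_comp}), and that trivial-cylinder covers contribute zero $\overline{\omega}_b$-area only in the symplectization part where $\overline{\omega}_b=d(\phi_b^\pm\lambda)$ — which is precisely the region where the bottom level lives. A secondary technical point is the identification of $P_*$ as genuinely lying in $V^-$ rather than $V^+$: this is guaranteed because the bottom level maps into $\{-\infty\}\times V^-$ under $F_{\mathbf u}$, so its asymptotic orbits are orbits of $\lambda|_{V^-}$. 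Once these two points are pinned down, the estimate~\eqref{action_ineq_lemma} follows by discarding nonnegative terms as indicated.
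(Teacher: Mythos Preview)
Your approach is correct and takes a somewhat simpler route than the paper's. The paper locates $P_*$ at the interface between levels $0$ and $-1$: it picks a specific negative puncture of $\util_0$ (one lying on a component connected via nodal pairs to the unique positive puncture), then argues that a certain sub-disk of $S^{\mathbf{u},r}$ bounded by the corresponding special circle, after excising possibly nested sub-disks, is filled exclusively by pieces of the negative levels, whence Stokes gives $T_*\leq\sum_{m\leq-1}\int u_m^*d\lambda$. You instead go to the bottom level $m=-k_-$, note that it has no negative punctures (since the $\vtil_n$ have none) and hence every non-constant component carries a positive puncture, and take $P_*$ there; Stokes on that single component $S_*$ already gives $T_*\leq\int_{S_*}u_{-k_-}^*d\lambda\leq\sum_{m\leq-1}\int u_m^*d\lambda$. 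Both arguments then plug into the same area decomposition $\int_\C\vtil_n^*\overline{\omega}_b=e^b\sum_{m\geq1}\int u_m^*d\lambda+\int\util_0^*\overline{\omega}_b+e^{-b}\sum_{m\leq-1}\int u_m^*d\lambda$ and discard the nonnegative upper-level term. Your route sidesteps the paper's bookkeeping with nested special circles and the delicate choice of~$j_*$.

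Two minor points to tighten. First, your displayed expression $\int_{S_*}\util_{-k_-}^*\overline{\omega}_b$ is ill-posed since $\util_{-k_-}$ maps into $\R\times V^-$ rather than $\overline{W}$; the honest estimate is $T_*\leq\int_{S_*}u_{-k_-}^*d\lambda$ (pure Stokes on the bottom component, using $\Gamma^{-k_-}_-=\emptyset$), and the factor $e^{-b}$ enters only through the area decomposition above. Second, your concern about $b$-dependence of $\mathbf{u}$ is misplaced: the building is fixed by the SFT convergence in the hypotheses \emph{before} any $b$ is introduced, so the same $P_*$ satisfies~\eqref{action_ineq_lemma} for every $b>0$. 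Your parenthetical about trivial-cylinder covers at the bottom level can also be dropped: any cover of a trivial cylinder has both positive and negative punctures by the maximum principle for the $\R$-component, hence cannot occur when $\Gamma^{-k_-}_-=\emptyset$.
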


\begin{proof}
[Proof of Lemma~\ref{lemma_comp_forcing_levels_above}]
Assume, by contradiction, that $k_+=0$. By SFT convergence, specifically from condition (f), we have $$ (+\infty,p) \in F_{\mathbf{u}}(S^{\mathbf{u},r}) $$  from where it follows that $F_{\mathbf{u}}(S^{\mathbf{u},r}) \cap (\{+\infty\} \times V^+) = \{+\infty\}\times x(\R)$ and $p\in x(\R)$, absurd.
\end{proof}

\begin{proof}
[Proof of Lemma~\ref{lemma_comp_conseq_levels_above}]
Let $c_1,\dots,c_h$ be the special circles of $S^{\mathbf{u},r}$ corresponding to the negative punctures $\Gamma^+_{1}$ of $\util_{1}$. Note that $S^{\mathbf{u},r}$ is a disk and each $c_j$ is contained in its interior. Note also that the $c_j$ necessarily bound pairwise disjoint closed disks $D_j$ in the interior of $S^{\mathbf{u},r}$ since otherwise there would be a curve in some level $m\geq1$ without positive punctures, which is impossible.

Consider the building $\mathbf{u}_+$ on $\R\times V^+$ formed by the positive levels of $\mathbf{u}$, and denote by $r_+$ the restriction of $r$ to the nodal pairs in the positive levels. Then, by construction, we have $$ S^{\mathbf{u}_+,r_+} = S^{\mathbf{u},r} \setminus (\mathring{D}_1 \cup \dots \cup \mathring{D}_h) \qquad\qquad F_{\mathbf{u}}|_{S^{\mathbf{u}_+,r_+}} = \{+\infty\} \times F_{\mathbf{u}_+} $$ since $F_{\mathbf{u}_+}$ takes values on $V^+$. The top level $\util_{k_+}$ consists of one finite-energy sphere with precisely one positive puncture $\{z_+\} = \Gamma^+_{k_+}$ where it is asymptotic to $P$. Hence $\util_{k_+}$ does not cover a trivial cylinder and has nontrivial asymptotic behaviour at its punctures. Let $\sigma$ be a $d\lambda$-symplectic trivialization of $x(T\cdot)^*\xi$ aligned with the~$\vtil_n$; this is independent of $n$ since all $\vtil_n$ represent the same class $e$ in $\pi_2(W,x(\R))$.

The inequality $\wind_\infty(\util_{k_+},z_+,\sigma) \leq \wind_\infty(\vtil_n) \leq 1$ follows as in the proof of Lemma~\ref{lemma_limits_of_fast_planes}. Moreover, $\sigma$ extends to a $d\lambda$-symplectic trivialization of $(F_{\mathbf{u}_+})^*\xi$ still denoted by $\sigma$. We shall now prove that for every $m\geq1$ the curve $\util_m$ has a negative puncture $z^*_m$ where it is asymptotic to a closed Reeb orbit $P^*$ satisfying $\CZ^0_\sigma(P^*)\leq2$. We start with the top level, and let $\zeta_1,\dots,\zeta_N$ be the negative punctures of $\util_{k_+}$, and $P_j$ be the asymptotic limit of $\util_{k_+}$ at $\zeta_j$. If $\CZ^0_\sigma(P_j) \geq 3$ then $\wind_\infty(\util_{k_+},\zeta_j,\sigma) \geq 2$. Arguing by contradiction, assume that $\CZ^0_\sigma(P_j) \geq 3$ for all~$j$. Then
\begin{equation*}
\begin{aligned} 
0 \leq \wind_\pi(\util_{k_+}) &= \wind_\infty(\util_{k_+}) - 2 + N + 1 \\
& \leq N - \sum_{j=1}^N \wind_\infty(\util_{k_+},\zeta_j,\sigma) \leq N-2N = -N
\end{aligned}
\end{equation*}
from where it follows that $N=0$, absurd. Hence we find the desired $z^*_{k_+} \in \Gamma^-_{k_+}$. Now one can proceed inductively, estimating as above. In fact, all connected components $\wtil$ of all levels $\util_{m}$, $m\geq1$, have at most one positive puncture $\zeta_+$. Assume that the asymptotic limit $P_+$ of $\wtil$ at $\zeta_+$ satisfies $\CZ^0_\sigma(P_+)\leq 2$. If $\wtil$ has trivial asymptotic behavior at $\zeta_+$ then one can use Carleman's similarity principle to conclude that $\wtil$ is a (possibly branched) cover of a trivial cylinder. In this case $P_+$ and the various asymptotic limits $P_-$ at the negative punctures cover the same primitive Reeb orbit, the covering multiplicity of $P_+$ being at least equal to that of all $P_-$. It follows, in this case, that $\CZ_\sigma^0(P_-)\leq 2$ at all negative punctures. If $\wtil$ has nontrivial asymptotic behavior at $\zeta_+$ then $\CZ^0_\sigma(P_+)\leq 2$ implies that $\wind_\infty(\wtil,\zeta_+,\sigma) \leq 1$ and one can argue as we did for the top level.

By the above argument we can assume, without loss of generality, that the asymptotic limit $P'=(x',T')$ of $\util_1$ at the negative puncture corresponding to $c_1$ satisfies $\CZ^0_\sigma(P')\leq 2$. It follows that $P'$ is geometrically distinct from $P$. Consider the disk $D \subset S^{\mathbf{u},r}$ spanned by $c_1$. Note that $D$ is contained in the interior of the larger disk $S^{\mathbf{u},r}$ which is naturally oriented by the conformal structures of the levels of $\mathbf{u}$. The orientation induced on $D$ orients its boundary $c_1$. With this orientation, $F_{\mathbf{u}}$ maps $c_1$ to $\{+\infty\} \times x'(\R)$ along the Reeb flow. If we remove special circles from the interior of $D$ then we are left with an open subset of $S^{\mathbf{u},r}$ equal to the union of a certain collection $\mathcal{V}$ of connected components of $\sqcup_{m\leq 0} S_m\setminus(\Gamma^+_m\cup\Gamma^-_m\cup D_m)$; the reason why we do not see connected components of $\sqcup_{m\geq1} S_m\setminus(\Gamma^+_m\cup\Gamma^-_m\cup D_m)$ is because this would force some curve on a level $m\geq1$ to have no positive punctures, absurd. Connected components in $\mathcal{V}$ contained in levels $m\leq-1$ are mapped by $F_{\mathbf{u}}$ to $\{-\infty\}\times V^-$, hence their images under $F_{\mathbf{u}}$ do not touch the images of the $\vtil_n$. If the image of $\util_0$ intersects the image of some $\vtil_{n_0}$ then, by stability and positivity of intersections, for all $n$ large enough the image of $\vtil_n$ intersects the image of $\vtil_{n_0}$. Here use the fact that to no connected component of its domain $\util_0$ restricts a reparametrization of the $\vtil_{n_0}$: for action reasons, every asymptotic limit at a positive puncture of $\util_0$ has action strictly less than $T$ (the $d\lambda$-area of the top level is positive). This is a contradiction to Lemma~\ref{lemma_intersections_fast_planes}, and shows that $F_{\mathbf{u}}|_{D}$ defines a capping disk for $P'$ in $\W_\infty$ that does not intersect the image of $\vtil_n$, for any $n$. By construction, the Conley-Zehnder index of $P'$ relative to $F_{\mathbf{u}}|_{D}$ is $\CZ^0_\sigma(P') \leq 2$. The inequality $T'<T$ follows from Stokes theorem and the fact that the $d\lambda$-area of the top level is positive. 
\end{proof}

\begin{proof}
[Proof of Lemma~\ref{lemma_comp_spheres_zero_level}]
By assumption the entire limiting building ${\bf u}$ is equal to the nodal curve at level zero. One can describe this nodal curve as a plane $\Pi$ asymptotic to the simply covered Reeb orbit $P$, and several $\jbar$-spheres $S_1,\dots,S_k$ in $\overline{W}$ connected by nodes. By Lemma~\ref{lemma_limits_of_fast_planes} $\Pi$ is a fast plane.
\end{proof}

\begin{proof}
[Proof of Lemma~\ref{lemma_comp_periods_lower_levels}]
Let $c_1,\dots,c_h$ be the special circles of $S^{\mathbf{u},r}$ corresponding to the positive punctures $\Gamma^+_{-1}$ of $\util_{-1}$. Note that $S^{\mathbf{u},r}$ is a disk oriented by the conformal structures of the levels of $\mathbf{u}$. Each $c_j$ is contained in the interior of $S^{{\bf u},r}$. If $(W,\omega)$ is exact then these circles necessarily bound pairwise disjoint disks in $S^{\mathbf{u},r}$, but if not then we might see nested circles: this situation can only arise if there are curves without positive punctures in the level zero of $\mathbf{u}$ that are not connected via a nodal pair to any other curve with a positive puncture. 

In any case, choose $j_* \in \{1,\dots,h\}$ with the following property: $c_{j_*}$ corresponds to a negative puncture of a component of $\util_0$ that either has a positive puncture, or has no positive punctures but is connected to a component of $\util_0$ with a positive puncture by nodal pairs. Note that in the latter case there is exactly one such nodal pair due to the fact that $S^{{\bf u},r}$ has genus zero and that $(W,\omega)$ is symplectically aspherical. Such $j_*$ must exist, since otherwise $\util_0$ has no positive puncture, contradicting the fact that the entire building is the SFT-limit of a sequence of planes in $\overline{W}$ with one positive puncture. Let $P_*=(x_*,T_*)$ be the asymptotic limit of $\util_0$ at the negative puncture corresponding to $c_{j_*}$.

Consider the disk $D_*$ in the interior of $S^{{\bf u},r}$ bounded by $c_{j_*}$. Consider the set $J_*$ consisting of those $j\in\{1,\dots,h\}$ such that $j\neq j_*$, $c_j\subset D_*$ and the disk in $S^{{\bf u},r}$ spanned by $c_j$ intersects $S_0\setminus(\Gamma^+_0\cup\Gamma^-_0\cup D_0)$. The set $J_*$ might be empty. Let $D'_*$ be the set obtained by removing from $D_*$ the interiors of the disks spanned by the $\{c_j\}_{j\in J_*}$. The crucial property for us is that $D'_*$ contains no circle $c_j$ with $j\in J_*$ in its interior. When $D_*$ inherits the orientation from $S^{{\bf u},r}$ and  $c_{j_*}$ is oriented as the boundary of $D_*$, the map $F_{\mathbf{u}}$ maps $c_{j_*}$ to $\{-\infty\}\times x_*(\R)$ along the Reeb flow. If we remove special circles from the interior of $D'_*$ then we are left with an open subset of $S^{\mathbf{u},r}$ equal to the union of a certain collection $\mathcal{V}$ of connected components of $\sqcup_{m\leq-1} S_m\setminus(\Gamma^+_m\cup\Gamma^-_m\cup D_m)$; the reason why we do not see connected components of $S_0\setminus(\Gamma^+_0\cup\Gamma^-_0\cup D_0)$ is because this would force some circle $c_j$ with $j\in J_*$ to be contained in the interior of $D'_*$, but these do not exist. Denote by $Y \in \mathcal{V} \mapsto m(Y) \in \{-k_-,\dots,-1\}$ the function which assigns the level of $\mathbf{u}$ corresponding to $Y \in \mathcal{V}$.
 
For every $m\neq0$ we write in components $\util_m=(a_m,u_m)$. By the description of SFT convergence, and by Stokes theorem, we know that 
\begin{equation*}
\begin{aligned}
\int_\C \vtil_n^*\overline{\omega}_b &= e^b \left( \sum_{m\geq1} \int_{S_m\setminus(\Gamma^+_m\cup\Gamma^-_m)} u_m^*d\lambda \right) \\
&\qquad + \int_{S_0\setminus(\Gamma^+_0\cup\Gamma^-_0)} \util_0^*\overline{\omega}_b \ + \ e^{-b} \left( \sum_{m\leq-1} \int_{S_m\setminus(\Gamma^+_m\cup\Gamma^-_m)} u_m^*d\lambda \right)
\end{aligned}
\end{equation*}
and that 
\begin{equation*}
T_* \leq \sum_{Y \in \mathcal{V}} \int_Y u_{m(Y)}^*d\lambda
\end{equation*}
with strict inequality when $J_*\neq\emptyset$. One can now estimate
\begin{equation*}
e^{-b} \ T_* \leq e^{-b} \ \sum_{m\leq-1} \int_{S_m\setminus(\Gamma^+_m\cup\Gamma^-_m)} u_m^*d\lambda \leq \int_\C \vtil_n^*\overline{\omega}_b \ - \ \int_{S_0\setminus(\Gamma^+_0\cup\Gamma^-_0)} \util_0^*\overline{\omega}_b
\end{equation*}
as desired.
\end{proof}

\subsection{The symplectic capping disk as a finite-energy plane}

Let $P=(x,T)$ be a simply covered periodic Reeb orbit on $V^+$, and let $D$ be a symplectic slicing disk for $x(\R)$. Consider $e \in \pi_2(W,x(\R))$ the class represented by $D$. For the following statement we identify homotopy classes according to Remark~\ref{rmk_hom_classes}.

\begin{lemma}\label{lemma_existence}
There exist $\jbar$ and $\vtil \in \Mfast(P,\jbar,e)$ such that $\wind_\infty(\vtil)=-\infty$.
\end{lemma}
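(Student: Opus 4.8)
The plan is to produce the pseudo-holomorphic plane directly from the symplectic slicing disk $D$ by an elliptic bootstrap/Bishop-disk type argument, and then to read off its asymptotics from the nondegeneracy hypothesis together with the Conley--Zehnder bound built into the definition of $\Mfast$. First I would choose a $d\lambda$-compatible complex structure $J$ on $\xi$ and an $\omega$-compatible $\jbar$ on $\overline{W}$ agreeing with an $\R$-invariant $\jtil$ in the ends, such that near $\partial D=x(\R)$ the disk $D$ is $\jbar$-holomorphic; this is possible because $D$ is a \emph{symplectic} disk, so along $D$ one can interpolate between a compatible complex structure making $D$ holomorphic in a neighborhood of the boundary and a generic choice in the interior, keeping $\omega$-compatibility throughout. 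Pushing $D$ slightly into the symplectization end along the Reeb direction and reparametrizing, one obtains a map from $\C$ that is $\jbar$-holomorphic near $\infty$ and asymptotic to $P$; the point is that $D$ being embedded and transverse to the contact structure forces the boundary loop to be a graph over $x(\R)$, hence the ``trivial'' asymptotic profile $z(s,t)\equiv 0$ in a Martinet tube.

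The core step is to deform this almost-$\jbar$-holomorphic disk to an honest $\jbar$-holomorphic plane $\vtil$ asymptotic to $P$ with the \emph{trivial} asymptotic behavior at $\infty$, i.e.\ $\wind_\infty(\vtil)=-\infty$ in the terminology of \S\ref{ssec_fast_planes}. I expect to invoke the Bishop family / filling argument: start the disk family from a point where $D$ meets a small standard ball, or, more cleanly, start from $D$ itself viewed as an ``approximately holomorphic'' solution and run a Newton--Picard / implicit-function-theorem scheme in the exponentially weighted Sobolev spaces already set up for the Fredholm theory of $\Mfast$, with the weight $\delta<0$ placed in the spectral gap between $\wind_\sigma=1$ and $\wind_\sigma=2$. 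The relevant linearized operator is Fredholm of index $2$ and, by the automatic transversality discussion in the excerpt, surjective at any embedded fast plane; the subtlety is that our initial disk is not yet a solution, so I would instead appeal to the Gromov/Hofer compactness machinery: take a family of ``stretched'' almost complex structures or a Bishop family emanating from $D$, obtain a sequence of genuine pseudo-holomorphic planes, and apply SFT compactness (\S on SFT Compactness) to extract a limit building. Here is where the hypotheses pay off: because $P$ is simply covered and $D$ has the smallest possible action in its homotopy class $e$, and because $(W,\omega)$ will (in the ambient argument) be handled together with Lemmas~\ref{lemma_comp_forcing_levels_above}--\ref{lemma_comp_periods_lower_levels}, breaking is obstructed and the limit is a single smooth plane; Lemma~\ref{lemma_limits_of_fast_planes} then guarantees it is fast, and Lemma~\ref{lemma_limits_in_moduli_space} places it in $\Mfast(P,\jbar,e)$.

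Finally I would upgrade ``fast'' to ``$\wind_\infty=-\infty$''. The inequality $\wind_\infty(\vtil)\le 1$ together with $\CZ(\vtil)=\CZ^0_\sigma(P)\ge 3$ forces, via the relation $\nu<\delta\Rightarrow\wind_\infty\le\alpha^{<\delta}_\sigma(P)=1$ from the excerpt, that the asymptotic eigenvalue is the \emph{largest} eigenvalue below $\delta$; but the convergence from the genuinely geometric disk $D$, whose boundary is a literal graph over $x(\R)$ with no spiralling, means the rescaled approach has \emph{no} dominant eigensection — formally, the section $z(s,t)$ in a Martinet tube decays faster than any exponential, which by Theorem~\ref{thm_asymptotics} is exactly the case $z(s,t)\equiv 0$, i.e.\ trivial asymptotic behavior. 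I would make this precise by comparing the plane with the trivial half-cylinder over $P$ using the weighted intersection number $*_\delta$ of Lemma~\ref{lemma_int_invariance}: if the asymptotic behavior were nontrivial one would get a strictly positive contribution contradicting $\vtil*_\delta\vtil=0$ from \eqref{int_u_u}, or one could argue that the homotopy class $e$ represented by the embedded symplectic disk $D$ pins down the relative self-intersection data in a way incompatible with a nontrivial eigenvalue. The main obstacle I anticipate is precisely this last dichotomy: ensuring the constructed plane inherits \emph{trivial} rather than merely ``fast'' asymptotics, which requires carefully tracking the asymptotic eigenvalue through the compactness argument and using the embeddedness/low-action of $D$ rather than any abstract index count, since a generic fast plane asymptotic to $P$ will have $\wind_\infty\in\{0,1\}$ and not $-\infty$.
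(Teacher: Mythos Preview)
Your proposal has a genuine gap, and it stems from missing the central idea of the paper's argument: the freedom to choose $\jbar$ is used \emph{after} constructing the desired symplectic surface, not before.

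The paper's proof is purely constructive and involves no analysis of Cauchy--Riemann equations, no Bishop families, no implicit function theorem, and no SFT compactness. In a Martinet tube $(\vartheta,z)$ for $P$, the end of the symplectic disk $D$ is parametrized as $(r,t)\mapsto (r-1,\,t,\,\Delta_0(r,t))$ with $\Delta_0(1,t)=0$. One multiplies $\Delta_0$ by a cutoff $\phi_\delta(r)$ supported in $[1-\delta,1]$ so that the modified map coincides with the trivial cylinder $(r-1,t,0)$ near $r=1$; the bulk of the proof is an elementary estimate (using $\Delta_0(1,t)=0$, $\partial_t\Delta_0(1,t)=0$, and $\|\phi_\delta'\|_\infty\le 2/\delta$) showing that for $\delta$ small the modified map is still symplectic. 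It can then be smoothly concatenated with the trivial half-cylinder over $P$ in $[0,+\infty)\times V^+$. Finally one simply \emph{chooses} $\jbar$ so that this embedded symplectic plane is $\jbar$-holomorphic: any cylindrical $\jtil$ makes the trivial cylinder holomorphic on the end, and on $W$ one picks an $\omega$-compatible almost complex structure tangent to the surface. By construction the plane equals the trivial cylinder near $\infty$, so $z(s,t)\equiv 0$ and $\wind_\infty=-\infty$.

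Your route fixes (most of) $\jbar$ first and then tries to produce a holomorphic plane by analytical corrections. Even if that produces a fast plane in $\Mfast(P,\jbar,e)$, your attempted upgrade to $\wind_\infty=-\infty$ fails: the identity $\vtil*_\delta\vtil=0$ from~\eqref{int_u_u} holds for \emph{every} embedded fast plane regardless of whether the asymptotic behavior is trivial, so it cannot distinguish $\wind_\infty=-\infty$ from $\wind_\infty\in\{0,1\}$; and a Newton--Picard or compactness correction of an approximate solution generically lands on a plane with nontrivial leading eigensection, not the degenerate one. The references to Lemmas~\ref{lemma_comp_forcing_levels_above}--\ref{lemma_comp_periods_lower_levels} and~\ref{lemma_limits_of_fast_planes}--\ref{lemma_limits_in_moduli_space} are also misplaced: those lemmas analyze SFT limits of sequences already lying in $\Mfast(P,\jbar,e)$, and you have no such sequence to begin with. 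The fix is exactly the paper's trick: first straighten $D$ near its boundary to match the trivial cylinder while keeping it symplectic, then let $\jbar$ be dictated by the surface.
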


\begin{proof}
Consider a Martinet tube $\Psi:N\to\R/\Z \times\D$ for $P$ defined on a small compact neighborhood $N$ of $x(\R)$. As before, if $(\vartheta,z=x_1+ix_2)$ denote coordinates on $\R/\Z \times\D$, then the contact form is represented on $\R/\Z \times\D$ via $\Psi$ as $f\alpha_0$, where $\alpha_0 = d\vartheta + x_1dx_2$, and $f:\R/\Z \times\D \to (0,+\infty)$ is identically equal to $T_0$ on $\R/\Z\times \{0\}$, and $df$ vanishes identically on $\R/\Z\times \{0\}$. We denote by $X = X(\vartheta,z)$ the associated representation of the Reeb vector field via $\Psi$. In this proof we assume, for simplicity and without loss of generality, that $T_0=1$. 

Using $\Psi$ we can represent the end of the symplectic disk $D$ in suitable coordinates $(r,t)$ as an embedding  $$ \util = (a,u) : [1-\epsilon,1]\times \R/\Z \to (-\infty,0] \times \R/\Z \times\D $$ satisfying 
\begin{itemize}
\item[i)] $a(r,t)=r-1$, $u(r,t)=(t,\Delta_0(r,t))$ with $\Delta_0(1,t)=0$.
\item[ii)] $d(e^\tau f\alpha_0)\ (\partial_r\util,\partial_t\util)>0$ on $[1-\epsilon,1]\times \R/\Z$.
\end{itemize}
Here $(r,t)$ denote the coordinates on $[1-\epsilon,1]\times \R/\Z$, and $\tau$ denotes the $\R$-component (first component) in the product $(-\infty,0] \times \R/\Z \times\D$. It follows from i) that 
\begin{equation*}
\begin{aligned}
&d(e^\tau f\alpha_0)|_{\util}\ (\partial_r\util,\partial_t\util) \\
&= \partial_ra \ (f\alpha_0|_u \cdot \partial_tu) - \partial_ta\ (f\alpha_0|_u \cdot \partial_ru) + d(f\alpha_0)|_u(\partial_ru,\partial_tu) \\
&= 1 + O(1-r)
\end{aligned}
\end{equation*}
which proves $\partial_ra(1,t)>0 \ \forall t$. Denote $M = \R/\Z \times \D$ and $\widetilde M = \R\times \D$ its universal covering with coordinates still denoted by $(\vartheta,z)$. We lift the map $\util$ to a map $\util:[1-\epsilon,1] \times \R \to \R\times \widetilde M$ denoted in the same manner. The coordinates on the universal covering $[1-\epsilon,1] \times \R$ of $[1-\epsilon,1] \times \R/\Z$ are still denoted $(r,t)$. We also lift $f$ to a smooth function on $\widetilde M$ which is $1$-periodic in $\vartheta$, still denoted by $f$. The $\widetilde M$-component $u$ of $\util$ has components $u=(t,\Delta_0(r,t))$ denoted just as before. Note that $\Delta_0$ and $\nabla \util$ are $1$-periodic in $t$. Perhaps after shrinking $\epsilon$ we have
\begin{equation}\label{shrink_epsilon}
\inf_{(r,t)} \partial_ra(r,t) > 0 \ \ \text{ and } \ \ \inf_{(r,t)} \alpha_0 \cdot (\partial_tu_0,0) > 0.
\end{equation}

For each positive $\delta < \epsilon$ consider a smooth function $\phi_\delta:\R\to [0,1]$ satisfying $\phi_\delta(r) = 1$ when $r$ lies on a neighborhood of $(-\infty,1-\delta]$, $\phi_\delta(r) = 0$ when $r$ lies on a neighborhood of $[1,+\infty)$, and $\|\phi_\delta'\|_\infty \leq 2/\delta$. Consider $\vtil_\delta : [1-\epsilon,1]\times\R \to \R\times\widetilde M$ defined by
\begin{equation}
\vtil_\delta = (a,v_\delta) = (r-1,v_\delta) \ \ \text{ where } \ \ v_\delta := (t,\phi_\delta(r)\Delta_0(r,t)).
\end{equation}

\noindent {\it Claim.} If $\delta$ is small enough then $d(e^\tau\lambda) \ (\partial_r\vtil_\delta,\partial_t\vtil_\delta)>0$ on $[1-\epsilon,1]\times \R$.

\noindent {\it Proof of the Claim.}
Note that $\vtil_\delta-\util = (0,0,(\phi_\delta-1)\Delta_0)$, hence
\begin{equation}\label{bound_derivative_1}
|\nabla \vtil_\delta(r,t) - \nabla \util(r,t)| \leq \|\phi'_\delta\|_\infty \ |\Delta_0(r,t)| + \|\nabla \Delta_0\|_\infty \leq \frac{2}{\delta} |\Delta_0(r,t)| + \|\nabla \Delta_0\|_\infty
\end{equation}
Since $\Delta_0(1,t)=\partial_t\Delta_0(1,t)=0 \ \forall t$ we use Taylor's formula to find $c_1>0$ independent of $\delta$ such that
\begin{equation}\label{estimates}
|\Delta_0(r,t)| + |\partial_t\Delta_0(r,t)| \leq c_1(1-r).
\end{equation}
If $r>1-\delta$ then $2\delta^{-1} < 2(1-r)^{-1}$ and we get from this and~\eqref{bound_derivative_1} that
\begin{equation*}
|\nabla \vtil_\delta(r,t) - \nabla \util(r,t)| \leq \frac{2}{\delta} c_1(1-r) + \|\nabla \Delta_0\|_\infty \leq 2c_1 + \|\nabla \Delta_0\|_\infty  \text{ when } r>1-\delta.
\end{equation*}
Since $\vtil_\delta(r,t)=\util(r,t)$ when $r\leq 1-\delta$, 
we conclude that
\begin{equation}\label{bound_derivative_2}
\|\nabla \vtil_\delta\|_\infty \leq \|\nabla \vtil_\delta-\nabla\util\|_\infty + \|\nabla\util\|_\infty \leq c_2
\end{equation}
for some $c_2$ independent of $\delta$.

Note that $\|\Delta_0\|_\infty \leq 1$. There exists $c_3>0$ such that 
\begin{equation}\label{estimates_f_df}
|{df}_{(\vartheta,z)} \cdot w | \leq c_3\,|z| \,\|w\| \ \text{ and } \ |f(\vartheta,z)| \leq c_3 \qquad \forall (\vartheta,z) \in \R \times \D, \ \forall w\in \R^3.
\end{equation}
Note also that
\begin{equation}\label{estimates_lambda_0}
|{\alpha_0}_{(\vartheta,z)} \cdot w| \leq (1+|z|)\|w\| \leq 2\|w\| \ \ \forall z,w.
\end{equation}
If $r>1-\delta$ we can estimate at the point $(r,t)$ 
\begin{equation}\label{bound_dlambda_1}
\begin{aligned}
|(df\wedge \alpha_0) \ (\partial_rv_\delta,\partial_tv_\delta)| 
&\leq |df \cdot \partial_rv_\delta||{\alpha_0} \cdot \partial_tv_\delta| + |df \cdot \partial_tv_\delta| |\alpha_0 \cdot \partial_rv_\delta| \\
&\leq 2c_3\,|\Delta_0| \,\|\partial_rv_\delta\|\,\|\partial_tv_\delta\| \\
&\leq 2c_1c_2^2c_3 (1-r) \\
&\leq 2c_1c_2^2c_3 \delta
\end{aligned}
\end{equation}
where~\eqref{estimates},~\eqref{bound_derivative_2},~\eqref{estimates_f_df} and~\eqref{estimates_lambda_0} were used. If $r>1-\delta$ we estimate at the point $(r,t)$
\begin{equation}\label{bound_dlambda_2}
\begin{aligned}
|f d\alpha_0 \ (\partial_rv_\delta,\partial_tv_\delta)|
&=|f d\alpha_0 \ (\partial_rv_\delta,(1,\phi_\delta(r)\partial_t\Delta_0))| \\
&\leq c_3 \|\nabla \vtil_\delta\|_\infty |\partial_t\Delta_0| \\
&\leq c_1c_2c_3 (1-r) \\
&\leq c_1c_2c_3 \delta
\end{aligned}
\end{equation}
where~\eqref{estimates},~\eqref{bound_derivative_2},~\eqref{estimates_f_df} and~\eqref{estimates_lambda_0} were used again. Combining~\eqref{bound_dlambda_1} with~\eqref{bound_dlambda_2} we find $c_4>0$ independent of $\delta$ such that
\begin{equation}\label{bound_dlambda_3}
|d(f\alpha_0) \ (\partial_r v_\delta,\partial_t v_\delta)| \leq c_4\delta \ \text{ at the point } \ (r,t) \in (1-\delta,1]\times\R.
\end{equation}
Consider $m=\inf \{ f(\theta,z) \} > 0$. Finally we estimate
\begin{equation*}
\begin{aligned}
&(d\tau \wedge f\alpha_0) \ (\partial_r\vtil_\delta,\partial_t\vtil_\delta) \\
&= \partial_ra \ (f\alpha_0 \cdot \partial_tv_\delta) - \partial_ta \ (f\alpha_0 \cdot \partial_rv_\delta) \\
&= (f\alpha_0 \cdot (1,0) + f\alpha_0 \cdot (0,\phi_\delta\partial_t\Delta_0))  \\
&\geq m \ (1 - 2c_3|\partial_t\Delta_0|) \\
&\geq m - c_5(1-r)
\end{aligned}
\end{equation*}
for some $c_5>0$ independent of $\delta$. It follows that
\begin{equation}\label{bound_omega_1}
(d\tau \wedge f\alpha_0) \ (\partial_r\vtil_\delta,\partial_t\vtil_\delta) \geq m  - c_5\delta \ \text{ whenever } \ r>1-\delta.
\end{equation}
Combining~\eqref{bound_dlambda_3} with~\eqref{bound_omega_1} we get
\begin{equation}
(d\tau \wedge f\alpha_0 + d(f\alpha_0)) \ (\partial_r\vtil_\delta,\partial_t\vtil_\delta) \geq m - (c_4+c_5)\delta \ \text{ if } \ r>1-\delta.
\end{equation}
Since $\vtil_\delta = \util$ when $r\leq 1-\delta$ we conclude, using ii), that $$ (d\tau \wedge f\alpha_0 + d(f\alpha_0)) \ (\partial_r\vtil_\delta,\partial_t\vtil_\delta)>0 $$ on $[1-\epsilon,1]\times \R$. The claim is proved.

\medskip

The arguments so far show that $\util$ can be modified to a smooth symplectic map $\vtil_\delta$ which can be concatenated to the trivial cylinder in a smooth way. This allows us to find $\jbar$ such that the resulting embedded symplectic surface (disk) is $\jbar$-holomorphic, yielding the desired element of $\Mfast(P,\jbar,e)$ satisfying $\wind_\infty=-\infty$.
\end{proof}

\subsection{Concluding the proof of Theorem~\ref{main2}}

Recall that we may work under the assumption that $\lambda$ is nondegenerate.

Consider the projection $\Mfast_1(P,\jbar) \to \Mfast(P,\jbar)$ obtained by forgetting the marked point. This is a surjective submersion. Let $e$ be the homotopy class induced by the symplectic slicing disk $D$ for $P$. We define $\Mfast_1(P,\jbar,e)$ to be the pre-image of $\Mfast(P,\jbar,e)$ by the forgetful map above. By Lemma~\ref{lemma_existence} $\Mfast_1(P,\jbar,e)\neq\emptyset$. It was already observed before that 
\begin{equation}\label{ev_right_one}
\ev:\Mfast_1(P,\jbar,e) \to \W
\end{equation}
is a smooth submersion; see Remark~\ref{rmk_ev_submersion}. Hence its image is open and non-empty in $\W$. Lemma~\ref{lemma_int_invariance} implies that~\eqref{ev_right_one} is injective.

To prove surjectivity of~\eqref{ev_right_one} we will show that $\ev(\Mfast_1(P,\jbar,e))$ is closed in~$\W$. To this end suppose that $[\vtil_n,z_n] \in \Mfast_1(P,\jbar,e)$ and that $\ev[\vtil_n,z_n]$ converges to a point $q\in\overline{W}$. Up to choice of a subsequence, we may assume that $[\vtil_n,\C\cup\{\infty\},i,\{\infty\},\emptyset,\emptyset]$ SFT-converges to a building $\mathbf{u}$ of height $k_-|1|k_+$. We claim that $k_+=0$. If not then Lemma~\ref{lemma_comp_conseq_levels_above} implies that $V_0^+$ is not dynamically convex in $W$ relative to $(P,D)$. If $k_-\geq1$ then Lemma~\ref{lemma_comp_periods_lower_levels} provides a 
periodic Reeb orbit $P_*=(x_*,T_*)$ in $V^-$ such that $T_*$ satisfies~\eqref{action_ineq_lemma} for all $b>0$. The monotonicity lemma implies that the term $\int_{\util_0^{-1}(W)} \util_0^*\overline{\omega}_b$ has a positive lower bound independent of $b$, where $\util_0$ is the map representing the level zero of $\mathbf{u}$. Taking the limit as $b\to0^+$ we get $T_*<T$. This is one of the alternatives in Theorem~\ref{main2}, and we are now left with the case $k_-=0$. In this case, $\mathbf{u}$ consists of a single nodal curve $\util_0$. Lemma~\ref{lemma_comp_spheres_zero_level} tells us that there is a $\jbar$-holomorphic sphere if $\util_0$ has nodal points, but such spheres are excluded since $(W,\omega)$ is aspherical. Hence $\util_0$ is a finite-energy plane asymptotic to $P$. It is the $C^\infty_{\rm loc}$-limit of a suitable reparametrization of the planes in the sequence $\vtil_n$ which we still denote by $\vtil_n$. Lemma~\ref{lemma_limits_in_moduli_space} implies that $\util_0$ represents a curve in $\Mfast(P,\jbar,e)$. SFT convergence implies that $[\vtil_n,z_n]$ converges in $\Mfast_1(P,\jbar,e)$ to $[\util_0,\zeta]$ for some $\zeta\in\C$. Hence $\ev[\util_0,\zeta] = q$.

If $V^+_1$ is a connected component of $V^+$, $V^+_1\neq V^+_0$, then fix $q$ in $(0,+\infty) \times V^+_1$ and, using the above, find $[\vtil,z] \in \Mfast_1(P,\jbar,e)$ such that $\ev[\vtil,z]=q$. Hence we get $\jtil$-holomorphic disk with boundary on $\{0\}\times V^+_1$ through $q$, violating the maximum principle for the $\R$-component of this disk. It follows that $V^+ = V^+_0$.

Now take a sequence $q_n \in \W$ satisfying $q_n \to (+\infty,p)$ on $\W_\infty$, with $p\in V_+ \setminus x(\R)$. We claim that if $\ev[\vtil_n,z_n]=q_n$ then there is $n_0$ such that $n\geq n_0 \Rightarrow \vtil_n(\C) \subset [0,+\infty) \times V^+$. If not then we could apply Lemma~\ref{lemma_comp_forcing_levels_above} to conclude that the sequence $[\vtil_n,\C\cup\{\infty\},i,\{\infty\},\emptyset,\emptyset]$ SFT-converges to a building with $k_+>0$, and Lemma~\ref{lemma_comp_conseq_levels_above} contradicts the assumed dynamical convexity of $(V_0^+,\lambda)$ in $W$ relative to $(P,D)$. We now claim that if $n\geq n_0$ and $\vtil_n=(b_n,v_n)$ then $v_n$ is an immersion transverse to $X_\lambda$. This follows since $\vtil_n$, with $n\geq n_0$, defines a plane in $(\R\times V^+,\jtil)$ and we can estimate $0\leq \wind_\pi(\vtil_n) = \wind_\infty(\vtil_n)-1 \leq 0$. Moreover, $v_n$ is injective and does not intersect $x(\R)$. This is a consequence of $\vtil_n *_\delta (c\cdot \vtil_n) = 0$ for all $c>0$, where $c\cdot \vtil_n$ denotes $\R$-translation by $c>0$. Hence $v_n$ determines an embedded disk for $P$ in $V^+$ transverse to the Reeb vector field. It follows that $P$ is unknotted and has self-linking number $-1$. 

At this point it is simple to show that $(V^+,\ker\lambda) \simeq (S^3,\xi_0)$. Above we showed that there exists $[\vtil] \in \Mfast(P,\jbar,e)$ satisfying $\vtil(\C) \subset [0,+\infty)\times V^+$. Up to translating down we may assume that $\min b(\C) \in [0,1]$ where $\vtil=(b,v)$. The space of planes in $\Mfast(P,\jbar,e)$ that are contained in $[0,+\infty)\times V^+$ and whose images intersect $[0,1]\times V^+$ is then non-empty. It is also compact since a building obtained as the SFT-limit of a sequence of such planes has height $0|1|k_+$ and its zero level is contained in $[0,+\infty)\times V^+$, so Lemma~\ref{lemma_comp_conseq_levels_above} tells us that $k_+=0$, and the zero level has no spheres because $\jtil$ is compatible with an exact symplectic form on $[0,+\infty)\times V^+$. Hence the limiting building is a plane in $([0,+\infty)\times V^+,\jtil)$, which by Lemma~\ref{lemma_limits_in_moduli_space} belongs to $\Mfast(P,\jbar,e)$. As explained before, such a plane projects to an embedded spanning disk for $P$ in $V^+$ transverse to the flow. The compactness result just proved allows us to obtain an $S^1$-family of such spanning disks. Moreover, any two such planes $\vtil_j=(b_j,v_j)$, $j\in\{0,1\}$, satisfy either $v_0(\C)=v_1(\C)$ or $v_0(\C) \cap v_1(\C) =\emptyset$; in fact, if $v_0(\C) \cap v_1(\C) \neq \emptyset$ then, up to relabeling, we find $c\geq0$ such that $\vtil_0(\C) \cap (c\cdot \vtil_1)(\C) \neq \emptyset$, hence $c=0$ and $[\vtil_0]=[\vtil_1]$ by Lemma~\ref{lemma_intersections_fast_planes}, and $v_0(\C)=v_1(\C)$. It follows that the projections to $V^+$ of the planes in the $S^1$-family form an open book with disk-like pages supporting $(V^+,\ker\lambda)$. Thus $(V^+,\ker\lambda) \simeq (S^3,\xi_0)$. 

Finally we need to prove that $(W,\omega)$ is symplectomorphic to a star-shaped domain in $(\C^2,\omega_0)$. Since $(V=V^+,\ker\lambda)$ is contactomorphic to $(S^3,\xi_0)$, we can find a star-shaped domain $\Omega\subset\C^2$, $\epsilon>0$ small and a symplectomorphism $$\varphi : ([-\epsilon,+\infty)\times V^+,d(e^a\lambda)) \to ([-\epsilon,+\infty)\times \partial\Omega,d(e^a\lambda_0)). $$
Here $\lambda_0 = \iota^*\alpha_0$, where $\iota:\partial\Omega \to \C^2$ is the inclusion map and $\alpha_0$ is the standard Liouville form on $\C^2$. Note that $((-\frac{\epsilon}{2},+\infty)\times \partial\Omega,d(e^a\lambda_0))$ is symplectomorphic to $(\C^2\setminus K,\omega_0)$ for a compact set $K \subset \Omega \setminus\partial\Omega$. In fact, $K$ can be taken as a suitable scaling of $\Omega$. In view of the definition of the symplectic completion $(\overline{W},\overline{\omega})$, see~\S~\ref{ssec_symp_mfds}, $(-\frac{\epsilon}{2},+\infty)\times V^+$ can be seen as the complement in $\overline{W}$ of some compact subset of $W\setminus\partial W$. Hence, there is a suitable compact set $K' \subset W\setminus\partial W$ such that $\varphi$ can be used to define a symplectomorphism $(\overline{W}\setminus K',\overline{\omega}) \to (\C^2\setminus K,\omega_0)$. We can now apply Theorem~9.4.2 from~\cite{J_book} to conclude that $ (\overline{W},\overline{\omega})$ is symplectomorphic to $(\C^2,\omega_0)$ via a symplectomorphism that sends $W$ to $\Omega$.

\begin{remark}
Another interesting consequence from the above argument is that the supporting open book with binding $P$ in $(V^+,\xi)=(S^3,\xi_0)$ has pages that are global surfaces of section for the flow. This follows from arguments as in~\cite{fast}.
\end{remark}

\end{document}